\theoremstyle{plain}
\newtheorem{theorem}{Theorem}[section]
\newtheorem{corollary}{Corollary}[section]
\newtheorem{lemma}{Lemma}[section]
\newtheorem{proposition}{Proposition}[section]
\theoremstyle{remark}
\newtheorem{remark}{Remark}[section]
\newtheorem{example}{Example}[section]
\numberwithin{equation}{section}
\newcommand{\bC}{{\mathbb C}}
\newcommand{\bZ}{{\mathbb Z}}
\newcommand{\tr}{\operatorname{tr}}
\newcommand{\Pf}{\operatorname{Pf}}
\newcommand{\of}{{\mathfrak o}_{2n} }
\newcommand{\off}{\hat{{\mathfrak o}}_{2n} }
\newcommand{\fg}{\mathfrak g }
\newcommand{\fh}{\mathfrak  h}
\title{ A new  formula for  Pfaffian-type Segal-Sugawara  vector}
\author{Natasha Rozhkovskaya}
\address{ Kansas State University  Department of Mathematics, 138 Cardwell Hall,  Manhattan, KS, 66502}
\begin{document}
\maketitle
\begin{abstract}
A  combinatorial  formula  for Pfaffian  for the universal  enveloping algebra  $U(\off)$ of the affine Kac--Moody algebra $\off$ is proved.    It  allows  easily  to compute  the image of this  Segal-Sugawara vector   under  the Harish-Chandra homomorphism  and  to deduce  formulas   for classical  Pfaffian of   universal enveloping algebra $U(\of)$ of the  even orthogonal Lie algebra.
\end{abstract}
\section{Introduction}

Let $U(\fg)$ be  the universal enveloping algebra of a simple Lie algebra $\fg$. The Harish-Chandra isomorphism  identifies the  center $Z(\fg)$ of the algebra $U(\fg)$  with  the    algebra  of polynomials over the  Cartan subalgebra $\fh$ of $\fg$ that are  invariant under a certain  action of the corresponding Weyl group $W$.  The elements of  $Z(\fg)$  act in finite-dimensional irreducible representations of $\fg$ by multiplication by scalars, and these eigenvalues  can be found  from the images of central elements under the  Harish-Chandra isomorphism.
 
For  the even orthogonal Lie algebra $\fg=\of$  the center is generated by  $n$  elements  those images  under the  Harish-Chandra isomorphism are  $W$-invariant polynomials over $\fh$ of degrees $2,4,6,\dots,  2n-2$ and $n$.
There are several  constructions  that define these generators explicitly (see, e.g. \cite{HU, MN, PP, AW}).  In particular,  the generator of degree $n$ can be realized as a non-commutative  Pfaffian of a certain matrix with coefficients in $U(\of)$.
Namely,  in the notations of Section  \ref{ext}, if $F$ is the $2n\times 2n$ matrix  those  entries $\{F_{ij}\}$, $(i,j\in \{-n,\dots, -1,1,\dots n\})$ are  standard generators of $U(\of)$, and $J_{2n}$  is the  matrix  of the form that defines  $\of$, the Pfaffian of $\tilde F= FJ_{2n}$  is the central element defined by the formula 
\begin{align}\label{defpf}
\text{Pf}\,(\tilde F)=\frac{1}{2^n n!}\sum_{\sigma\in S_{[-n;n]} }\text{sgn}\,{(\sigma)} (FJ_{2n})_{\sigma(-n)\sigma(-n+1)}\dots  (FJ_{2n})_{\sigma(n-1)\sigma(n)}.
\end{align}
In \cite {TH}   a combinatorial  formula for $\text{Pf}\,\tilde F$  is proved,   which  (with some change of notations  to match the notations of  this note) can be formulated as follows:
\begin{align}\label{HP}
\text{Pf}\,(\tilde F)=\sum_{k=0}^{{\lfloor{n/2}\rfloor}} 
\sum_{\substack {I,J\subset \{-n,\dots, -1 \}\\|I|=|J|=2k}} \text{sgn}({\bar I, I}) \text{sgn} ({\bar J, J})\text{det}\, (\tilde F_{\bar I \bar J} +\rho(|J|)) \text{Pf}\, \tilde F_{-J\,J}\text{Pf}\, \tilde F_{I\,-I}.
\end{align}
 Here  for a subset $I$ of $\{-n,\dots, -1\}$,  the set  $\bar I$ is the complement    of $I$ in $\{-n,\dots, -1 \}$;  a  submatrix $\tilde F_{IJ}$  is  defined by
$\tilde F_{IJ}=(\tilde F_{ij})_{i\in I, j\in J}$;    $\rho(k)$ stands for the diagonal matrix $\rho(k)=\text{diag}\,(k-1,k-2,\dots, 1,0)$; the expressions 
 $\text{det}\, (\tilde F_{\bar I \bar J} +\rho(|J|))$, $\text{Pf}\, \tilde F_{-J\, J}$,  $\text{Pf}\, \tilde F_{I\, -I}$ are certain  noncommutative analogues of 
 the determinant and  Pfaffian of a matrix;  
  $\text{sgn}({\bar I, I})$,  $\text{sgn}({\bar J,J})$  are the signs of  certain permutations.  
(We refer the reader for full details to {\cite{TH}}.)
The advantage of this formula is that it immediately provides  the eigenvalues of the central element $\text{Pf}\, \tilde F$ on the highest weight modules. 

The purpose of this note is  to prove a  similar to (\ref{HP})  combinatorial  formula  for a Pfaffian of the matrix of generators  for the universal  enveloping algebra  $U(\off)$ of the affine Kac--Moody algebra $\off$.
using the techniques of \cite{IU} and \cite{TH}. The main results of this  paper are  Theorems \ref{thm1} and \ref{thm2}.

 In general, for a simple Lie algebra $\fg$ over $\bC$,  the  vacuum module $V_{-h^\vee}(\fg)$ over the affine Kac--Moody algebra $\hat \fg$  at the critical level  ${-h^\vee}$ is defined 
 as the quotient of the universal enveloping algebra  $U(\hat \fg)$ by the left ideal generated by $\fg[t]$ and $K+ h^\vee$, where $ h^\vee$ denotes the  
 dual Coxeter number  for $\fg$. As a vertex algebra,  $V_{-h^\vee}(\fg)$ has a non-trivial center $\mathfrak{z}(\hat \fg)$ defined by
  $$\mathfrak{z}(\hat \fg)=\{ S\in V_{-h^\vee}(\fg) \,|\, \fg[t]S=0\}.$$
  It structure is described  by the Feigin-Frenkel theorem (1992); see
\cite{F}   for detailed exposition.
    In \cite{MC, CT, M1}  complete sets of  explicit generators of the commutative algebra
    $\mathfrak{z}(\hat \fg)$ are constructed for  classical Lie algebras.  In the case of $\fg=\of$ the set of generators contains an element $\text{Pf\,}\tilde F[-1]$
defined below by  (\ref{defpf}), and this element  is the main focus of this note. 

\smallskip

Our goal is to prove a combinatorial formula  (Theorem \ref{thm1} and Theorem \ref{thm2})  for   $\text{Pf}\,\tilde F[-1]$   in the  spirit of formulas for Pfaffians 
proved in \cite{TH}.  The immediate application of this result  is that it  allows  to compute  easily the image of the  Pfaffian $\text{Pf}\,\tilde F[-1]$ under  the Harish-Chandra homomorphism (Section \ref{sec_HC}). 
The algebra  $\mathfrak{z}( \off)$ is  a  commutative subalgebra of  $U(t^{-1}\of[t^{-1}])^\fh$ --  the algebra of invariants under the action of the Cartan subalgebra $\fh$. A restriction of a  Harish-Chandra homomorphism 
 $U(t^{-1}\of[t^{-1}])^\fh\to U(t^{-1}\fh[t^{-1}]) $ to the subalgebra $\mathfrak{z}(\off)$ yields  an isomorphism of
 $\mathfrak{z}( \off)$ to the classical $\mathcal{W}$-algebra $\mathcal{W}(\of)$ (see \cite{F,MM}). The Harish-Chandra images of generators 
 of $\mathfrak{z}(\hat \fg)$   for classical Lie algebras $\fg$ were computed  in \cite{MC, MM}.  Below  Corollary {\ref{HC}}   deduces  the Harish-Chandra  image of $\text{Pf\,}\tilde F[-1]$  from the Theorem  \ref{thm1}  (or  Theorem \ref{thm2}) of the paper. 
 
 Moreover, we illustrate in Section \ref{fol} that the formulas of \cite{TH}  for Pfaffians in classical case can be deduced from the formulas  for affine Lie algebra $\off$ proved  in this note.

\subsection*{Acknowledgements} The author is very grateful to A.\,I.\,Molev for brining this problem to her attention and for  valuable discussions. She also would like to thank 
Max Planck Institute for Mathematics   for  warm hospitality.

\section{ Application of  exterior calculus }\label{ext}
Let $J_{2n}$ be a  $2n\times 2n$ symmetric matrix  with  ones  on the anti-diagonal and  zeros elsewhere.  Consider
 the corresponding  realization  of   $\of$ as a Lie algebra  of $2n\times 2n$ complex-valued  matrices $X$ that satisfy 
$ X^TJ_{2n}+J_{2n}X=0$.   
We will use the notation  $[-n;n] =\{-n,\dots, -1,1,\dots, n\}$ (the interval of integers  from $-n$ to $n$  with  the element $0$  being omitted). 
Let 
$\{E_{i,j}\}_{i,j\in [-n; n]}$  be the matrices that   have one in the $i$th row and $j$th column and zeros elsewhere. 
Set for $i,j\in{[-n;n]}$
$$
F_{i,j}=E_{i,j}- E_{-j,-i},\quad  ( F_{i,j}=-F_{-j,-i}).
$$
 The   Cartan  subalgebra $\fh$ of  $\of$ is generated by elements $F_{ii}$ ($i=-n,\dots, -1)$. 
Using the  bilinear form 
$$
\langle X,Y \rangle = \frac{1}{2} \tr (XY) \quad \text{for}~ X,Y\in \of,
$$
one defines the extended affine Kac--Moody algebra
$\off \oplus \bC\tau= \of[t, t^{-1}]\oplus \bC K\oplus \bC\tau$ and set
$X[r]=Xt^r$, $r\in \bZ$. Then the commutation relations in $\off$ are
\begin{align*}
[X[r], Y[s]]&=[X,Y][r+s] +r\delta_{r, -s} \langle X,Y \rangle K,\\
[\tau,X[r]]&=-r X[r-1],\quad [\off,K]=0.
\end{align*}
In particular,  for  $i,j,k,l\in[-n; n]$ one gets
\begin{align*}
[F_{i,j}[r],F_{k,l}[s] ]&= \delta_{j,k}F_{i,l}[r+s]+ \delta_{i,l}F_{-j,-k}[r+s]- \delta_{j,-l}F_{i,-k}[r+s]- \delta_{i,-k}F_{-j,l}[r+s]\\
&+ r\delta_{r,-s}(\delta_{j,k}\delta_{i,l}-\delta_{i,-k}\delta_{j,-l}) K. 
\end{align*}
$U(\off)$ stands for the universal enveloping algebra of $\off$.
Define $F[r]$ as  $U(\off)$-valued matrix    of size  $2n\times 2n$  with entries $(F[r])_{i,j}=F_{i,j}[r]$ for  $i,j\in{[-n;n]}$.
Then the  matrix   $\tilde F[-1]= F[-1]J_{2n}$  is skew-symmetric  
and, similarly to  the classical case, one defines the Pfaffian of $\tilde F[-1]$ as
\begin{align}\label{defpf}
\text{Pf}\,\tilde F[-1]=\frac{1}{2^n n!}\sum_{\sigma\in S_{[-n;n]} }\text{sgn}\,{(\sigma)}\tilde F_{\sigma(-n),\sigma(-n+1)}[-1]\dots \tilde F_{\sigma(n-1),\sigma(n)}[-1],
\end{align}
where the sum is over all permutations of the elements of the set $[-n; n]$.
By Lemma  4.3 in \cite{M1},    the Pfaffian  of $\tilde F[-1]$ 
is an element of Feigin-Frenkel  center $\mathfrak{z}(\off)$.
Our goal is to prove a combinatorial formula  for $\text{Pf}\,\tilde F[-1]$  that, in particular, allows  easily to compute the image of this Segal-Sugawara vector  under  the Harish-Chandra homomorphism. 

As in \cite{IU}, it is convenient for us to express Pfaffian-like elements as coefficients of powers of certain two-forms. 
Consider the algebra $\Lambda$  generated by elements $\{e_i\}_{i\in[-n;n]}$ with the defining relations  $e_ie_j=-e_je_i$ for all $i$ and $j$. Define  the $U(\off)$-valued  two-form
 \begin{align*}
\Omega[s] =\sum_{i,j\in[-n; n]} e_i e_{-j} F_{i,j}[s]\in \Lambda \otimes U(\off).
 \end{align*}
\begin{lemma}\label{omn}
 \begin{align*}
(\Omega[-1])^n=e_{-n}e_{-n+1}\dots e_{-1}e_{1}\dots e_{n} 2^n n!\text{Pf} \tilde F[-1].
 \end{align*}
\end{lemma}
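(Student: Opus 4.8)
The plan is to expand the $n$-th power $(\Omega[-1])^n$ directly and match it with the Pfaffian formula (\ref{defpf}). Writing $\Omega[-1]=\sum_{i,j\in[-n;n]} e_i e_{-j} F_{i,j}[-1]$, we raise this to the $n$-th power and collect the monomials in the $e$'s. The first observation is that each factor $\Omega[-1]$ contributes exactly two of the generators $e_k$ (namely $e_i$ and $e_{-j}$), so the product of $n$ copies contributes $2n$ of them; since $\Lambda$ is the exterior algebra on the $2n$ elements $\{e_k\}_{k\in[-n;n]}$, the only surviving monomial is the top one $e_{-n}e_{-n+1}\cdots e_{-1}e_1\cdots e_n$, and every term in the expansion reorders to a scalar multiple of it. Thus $(\Omega[-1])^n$ is automatically proportional to the right-hand side, and what must be checked is that the coefficient is $2^n n!\,\mathrm{Pf}\,\tilde F[-1]$.

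Next I would set up the bookkeeping for the coefficient. Expanding,
\begin{align*}
(\Omega[-1])^n=\sum_{\substack{i_1,j_1,\dots,i_n,j_n\in[-n;n]}} e_{i_1}e_{-j_1}e_{i_2}e_{-j_2}\cdots e_{i_n}e_{-j_n}\, F_{i_1,j_1}[-1]F_{i_2,j_2}[-1]\cdots F_{i_n,j_n}[-1].
\end{align*}
Here I am using that the one-forms $e_k$ anticommute among themselves but commute with the algebra coefficients, so no commutators in $U(\off)$ are generated when rearranging the $e$'s past the $F$'s; this is the key simplification afforded by the exterior-algebra encoding of \cite{IU}. For a nonzero term the $2n$ indices $i_1,-j_1,\dots,i_n,-j_n$ must be a permutation of $[-n;n]$; equivalently the data determine a permutation $\sigma\in S_{[-n;n]}$ by $\sigma(-n)=i_1$, $\sigma(-n+1)=-j_1$, $\sigma(-n+2)=i_2$, and so on, and the sign picked up in reordering $e_{i_1}e_{-j_1}\cdots e_{i_n}e_{-j_n}$ to the standard monomial is exactly $\mathrm{sgn}(\sigma)$. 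Recalling that $\tilde F[-1]=F[-1]J_{2n}$ has entries $\tilde F_{a,b}[-1]=F_{a,-b}[-1]$, the factor $F_{i_k,j_k}[-1]$ equals $\tilde F_{i_k,-j_k}[-1]=\tilde F_{\sigma(-n+2k-2),\sigma(-n+2k-1)}[-1]$, so each term is precisely $\mathrm{sgn}(\sigma)\,\tilde F_{\sigma(-n)\sigma(-n+1)}[-1]\cdots\tilde F_{\sigma(n-1)\sigma(n)}[-1]$ times the top monomial. Summing over all such terms reproduces $2^n n!$ times the right-hand side of (\ref{defpf}), since the definition (\ref{defpf}) divides by $2^n n!$ and here we sum over all permutations without that normalization.

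The one point requiring care — and the main obstacle — is the claim that the sign obtained by sorting $e_{i_1}e_{-j_1}\cdots e_{i_n}e_{-j_n}$ into $e_{-n}e_{-n+1}\cdots e_n$ is $\mathrm{sgn}(\sigma)$, together with the verification that the ordering of the noncommutative factors $F_{i_k,j_k}[-1]$ is consistent with the ordering prescribed in (\ref{defpf}). For the sign this is just the standard fact that transposing adjacent $e$'s corresponds to transposing adjacent values of $\sigma$, so sorting the word costs exactly the sign of the permutation that sorts it, which is $\mathrm{sgn}(\sigma)$; one should check the base orientation $e_{-n}\cdots e_n$ matches the identity permutation, which it does by construction. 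For the factor ordering, note that in both (\ref{defpf}) and the expansion the $k$-th factor uses the images of the $(2k-1)$-th and $(2k)$-th elements of $[-n;n]$ under $\sigma$, in the same left-to-right order, so the noncommutative products agree term by term with no reordering of $U(\off)$-elements needed. Assembling these observations gives the identity; comparison with (\ref{defpf}) and dividing through by $2^n n!$ completes the proof.
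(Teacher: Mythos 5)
Your proposal is correct and follows essentially the same route as the paper: expand $(\Omega[-1])^n$, use the anticommutativity of the $e_k$ to reduce to the top monomial with sign $\mathrm{sgn}(\sigma)$, rewrite $F_{i,j}[-1]=\tilde F_{i,-j}[-1]$, and compare with the defining sum for $\mathrm{Pf}\,\tilde F[-1]$. No gaps.
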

\begin{proof}  (cf. Proposition 1.1 in  \cite{IU} and  Lemma 4.1  in \cite{TH}):
 \begin{align*}
  \Omega[-1] ^n&=
  \sum_{i_1,j_1,\dots, i_n, j_n\in [-n;n]}
  e_{i_1} e_{-j_1}\dots e_{i_n}e_{-j_n}F_{i_1,j_1}[-1]\dots F_{i_n,j_n}[-1]\\
   &=
  \sum_{i_1,j_1,\dots, i_n, j_n\in [-n;n]}
  e_{i_1} e_{j_1}\dots e_{i_n}e_{j_n}\tilde F_{i_1,j_1}[-1]\dots\tilde F_{i_n,j_n}[-1]\\
   &= e_{-n} e_{-n+1}\dots e_{n-1}e_{n}
  \sum_{\sigma\in S_{[-n;n]}}
\text{sgn} (\sigma) \tilde F_{\sigma(-n),\sigma(-n+1)}[-1]\dots\tilde F_{\sigma(n-1),\sigma(n)}[-1]\\
&=e_{-n} e_{-n+1}\dots e_{n-1}e_{n} \,2^n n! \text{Pf}\,\tilde F[-1].
 \end{align*}
 \end{proof}
We also  introduce the following forms:
\begin{align*}
\Theta[r]=\sum_{ i=-n}^{-1}\sum_ {j=1}^{n} {e_i e_{-j}} F_{i,j}[r],&
\quad 
\tilde \Theta [r]=\sum_ {i=1}^{n} \sum_{ j=-n}^{-1} {e_i e_{-j}} F_{i,j}[r],\\
\Xi[r]=\sum_{i,j=-n}^{-1} {e_i e_{-j}} F_{i,j}[r] ,&\quad \Psi=\sum_{j=1}^{n} e_{j}e_{-j},\\
\xi[-1]=\Xi[-1] +2\Psi\tau, &\quad \tilde \xi[-1]=\Xi[-1] -2\Psi\tau,
\\
X[-1] = \Theta[-1]+\xi[-1],&\quad 
Y[-1] =\tilde \Theta[-1] +\tilde \xi[-1].
\end{align*}

The following properties of the forms hold: 
\begin{lemma}
\begin{align}
 \notag
\Psi=\sum_{j=-n}^{-1} e_{-j}e_{j}&=-\sum_{j=-n}^{-1} e_{j}e_{-j}=-\sum_{j=1}^{n} e_{-j}e_{j}. \\
[\Xi[r],\Xi[s]]&=-r\delta_{r,-s} \Psi^2 K,\\
[\Theta[r], \Theta[s]]&=[\tilde\Theta[r], \tilde\Theta[s]]=0,\label{tha}\\
[\Theta[r], \tilde\Theta[s]]&=-4\Psi \Xi[r+s] +2r\delta_{r,-s} \Psi^2 K,\label{4t}\\ 
[\Xi[r],\Theta[s] ]&= 2 \Psi\Theta[r+s],\quad  [\Xi[r],\tilde\Theta [s]]=-2 \Psi \tilde\Theta[r+s],\label{t1}\\
[\Xi[-1]+2a\Psi\tau ,\Theta[s] ]&= 2 \Psi(1-as)\Theta[s-1], \label{t2}\\ 
[\Xi[-1]+2b\Psi\tau ,\tilde\Theta[s] ]&= 2 \Psi(-1-bs)\tilde \Theta[s-1], \label{t3}\\ 
[X[-1],Y[-1]] &= 0.\label{xy}
\end{align}
\end{lemma}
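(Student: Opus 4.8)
The plan is to treat every form appearing in the statement as an element of $\Lambda\otimes U(\off\oplus\bC\tau)$ of the uniform shape $\sum e_i e_{-j}\otimes F_{i,j}[r]$, with $i,j$ running over prescribed subsets of $[-n;n]$, and to exploit that each monomial $e_i e_{-j}$ is even. Hence for two such forms the bracket is computed entrywise,
\[
\Big[\sum e_i e_{-j}\,F_{i,j}[r],\ \sum e_k e_{-l}\,F_{k,l}[s]\Big]=\sum e_i e_{-j}e_k e_{-l}\,[F_{i,j}[r],F_{k,l}[s]],
\]
after which one substitutes the explicit structure constants of $\off$ recorded above. The opening identity (the three expressions for $\Psi$) is immediate: relabel the summation index $j\mapsto -j$ and use $e_ae_b=-e_be_a$.

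For each of \eqref{tha}, \eqref{4t}, \eqref{t1}, as well as the unlabelled identity $[\Xi[r],\Xi[s]]=-r\delta_{r,-s}\Psi^2K$, I would insert the commutator $[F_{i,j}[r],F_{k,l}[s]]$ into the sum and first drop the Kronecker deltas that cannot hold for the relevant index ranges. For instance, in $[\Theta[r],\Theta[s]]$ all four indices are forced negative, so every delta vanishes identically and \eqref{tha} follows at once; in $[\Xi[r],\Xi[s]]$ two deltas survive, but the corresponding noncentral terms cancel, leaving only $-r\delta_{r,-s}\Psi^2K$. In each surviving term a delta forces one index equal to another, and because $e_ae_b$ is even a factor such as $e_{-j}e_j$ or $e_ie_{-i}$ can be pulled out of the ordered monomial $e_i e_{-j}e_k e_{-l}$; summing it over its range gives $\pm\Psi$ by the opening identity, and one further relabelling identifies the leftover sum as $\Theta[r+s]$, $\tilde\Theta[r+s]$, or $\Xi[r+s]$. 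The central contribution is collected the same way from the $r\delta_{r,-s}$ terms. The only genuinely laborious case is \eqref{4t}: there all four noncentral terms survive, a careful sign count shows each of them equals $-\Psi\,\Xi[r+s]$, and the two central terms together contribute $2r\delta_{r,-s}\Psi^2K$.

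Identities \eqref{t2} and \eqref{t3} then reduce to \eqref{t1} using $[\tau,X[s]]=-sX[s-1]$ applied entrywise: since $\Psi$ lies in the $\Lambda$-factor it is central in $\Lambda\otimes U(\off\oplus\bC\tau)$, so $[\Psi\tau,\Theta[s]]=-s\Psi\,\Theta[s-1]$ and likewise for $\tilde\Theta[s]$; adding this to \eqref{t1} at $r=-1$ gives the stated coefficients. Finally, for \eqref{xy} I would write $X[-1]=\Theta[-1]+\Xi[-1]+2\Psi\tau$ and $Y[-1]=\tilde\Theta[-1]+\Xi[-1]-2\Psi\tau$ and expand $[X[-1],Y[-1]]$ bilinearly into nine brackets; $[\Xi[-1],\Xi[-1]]$ and $[\Psi\tau,\Psi\tau]$ vanish, and the remaining seven are exactly the cases already computed, now at $r=s=-1$, so $\delta_{r,-s}=\delta_{-1,1}=0$ and no central term survives. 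Collecting the outputs, the coefficient of $\Psi\,\Theta[-2]$ is $-2+2=0$, the coefficient of $\Psi\,\tilde\Theta[-2]$ is $-2+2=0$, and the coefficient of $\Psi\,\Xi[-2]$ is $-4+2+2=0$; hence $[X[-1],Y[-1]]=0$.

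The main obstacle is bookkeeping rather than anything conceptual: getting all the signs right when the four exterior generators in the monomial $e_i e_{-j}e_k e_{-l}$ are reordered (especially in \eqref{4t}), and making sure that in each bracket precisely the right Kronecker deltas are discarded according to whether an index is forced positive or negative. Once \eqref{tha}--\eqref{t3} are in hand, the identity \eqref{xy} is a one-line cancellation.
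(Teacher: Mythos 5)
Your proposal is correct and follows essentially the same route as the paper: the paper likewise verifies each identity by a direct entrywise computation with the structure constants of $\off$ (illustrating only the case of \eqref{4t}), and your reduction of \eqref{t2}--\eqref{t3} to \eqref{t1} via $[\tau,X[s]]=-sX[s-1]$ and your nine-term expansion for \eqref{xy} are exactly the intended checks. The cancellations you record (the two noncentral terms in $[\Xi[r],\Xi[s]]$, and the coefficients $-2+2$ and $-4+2+2$ in \eqref{xy}) are all accurate.
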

\begin{proof}
All of  these relations can be checked directly. We illustrate this with  calculations of (\ref{4t}). With $i,l\in\{-n,\dots, -1\}$ and $j,k\in\{1,\dots, n\}$,
\begin{align*}
[\Theta[r], \tilde\Theta[s]]&=
\sum_{ijkl}e_{i}e_{-j} e_k e_{-l}[F_{i,j}[r], F_{k,l}[s]]
=\sum_{ijkl}e_{i}e_{-j} e_k e_{-l}\\
&\times \big(
 \delta_{j,k}F_{i,l}[r+s]+\delta_{i,l}F_{-j,-k}[r+s]
-\delta_{j,-l}F_{i,-k}[r+s] -\delta_{i,-k}F_{-j,l}[r+s]\\
&+r\delta_{r,-s}(\delta_{j,k}\delta_{i,l}-\delta_{i,-k}\delta_{j,-l})K\big)
\\
&= -\Psi\big(
\sum_{i,l}e_{i}e_{-l}F_{i,l}[r+s]
+\sum_{j,k}e_{-j}e_{k}F_{-j,-k}[r+s]\\
&+\sum_{i,k}e_{i}e_{k}F_{i,-k}[r+s]
+\sum_{j,l}e_{-j}e_{-l}F_{-j,l}[r+s]
\big) + 2r\delta_{r,-s} \Psi^2 K
\\
&=-4\Psi \Xi[r+s] +2r\delta_{r,-s} \Psi^2 K.
\end{align*}

\end{proof}

Observe that 
$\Omega[-1] =\tilde \Theta [-1]+2\Xi[-1]+ \tilde \Theta [-1]
=X[-1] + Y[-1]$,
 and since $X[-1]$ and $Y[-1]$ commute, we can write 
\begin{align*}
\Omega[-1]^n &= (X{[-1]}  +Y{[-1]})^n=\sum_{m+k=n}{n\choose m} Y[-1]^mX[-1]^{k}.
\end{align*}
From now on we will use the following notation: for a non-negative integer $a$ and a partition $\alpha\vdash a$, $\alpha=(a_1\ge\dots\ge a_l\ge 0)$, denote as 
$\tilde \Theta[-\alpha] =\tilde \Theta[-a_1]\dots \tilde \Theta[-a_l]$, and  as
$ \Theta[-\alpha] = \Theta[-a_1]\dots \Theta[-a_l] $.
We also set
$\tilde \Theta[-(0)] = \Theta[-(0)]=1$.
\begin{proposition}\label{yx}
\begin{align*}
Y[-1]^m=\sum_{a=0}^{m} 
\sum_{\alpha\vdash  a} \frac{m!}{(m-a)!m_1!m_2!\dots}
 (-2\Psi)^{ a-l(\alpha)}  
\tilde \Theta[-\alpha]\, (\tilde \xi[-1])^{m- a},
\\
X[-1]^m=\sum_{a=0}^{m} 
\sum_{\alpha\vdash  a} \frac{m!}{(m-a)!m_1!m_2!\dots}
 (-2\Psi)^{ a-l(\alpha)}  
( \xi[-1])^{m- a} \Theta[-\alpha] ,
\end{align*}
 where the sum is taken over all partitions $\alpha=(1^{m_1}, 2^{m_2}\dots )\vdash a$, 
 and  $l(\alpha)$
 is the length of $\alpha$:
$ l(\alpha)= m_1+m_2+\dots$\,.
\end{proposition}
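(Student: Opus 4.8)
The plan is to prove the formula for $Y[-1]^m$ by induction on $m$, with everything resting on the single commutation relation (\ref{t3}); the formula for $X[-1]^m$ will then follow by a mirror‑image argument based on (\ref{t2}). First I would record the facts needed: $\Psi$ is central (it is even in $\Lambda$ and scalar in $U(\off)$); by (\ref{tha}) the elements $\tilde\Theta[-1],\tilde\Theta[-2],\dots$ pairwise commute, so together with $\Psi$ they generate a commutative subalgebra $\mathcal{C}$; and specializing (\ref{t3}) to $s=-a$ with $b=-1$ (the value occurring in $\tilde\xi[-1]=\Xi[-1]-2\Psi\tau$) gives $[\tilde\xi[-1],\tilde\Theta[-a]]=-2(a+1)\Psi\,\tilde\Theta[-a-1]$ for $a\ge1$. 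Hence $\operatorname{ad}\tilde\xi[-1]$ restricts to a derivation $D$ of $\mathcal{C}$ with $D(\Psi)=0$ and $D(\tilde\Theta[-a])=-2(a+1)\Psi\,\tilde\Theta[-a-1]$, so that $\tilde\xi[-1]\,P=P\,\tilde\xi[-1]+D(P)$ for every $P\in\mathcal{C}$; in particular $D$ raises by one the total degree when $\tilde\Theta[-k]$ is assigned degree $k$, and $D(\tilde\Theta[-\alpha])=\sum_p(-2(a_p+1)\Psi)\,\tilde\Theta[-\alpha^{(p)}]$, where $\alpha^{(p)}$ is $\alpha$ with $1$ added to its $p$-th part.

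I would then write $Y[-1]^m=\sum_{b\ge0}Q_b^{(m)}\,(\tilde\xi[-1])^{m-b}$ with $Q_b^{(m)}\in\mathcal{C}$ of degree $b$, so that the assertion becomes precisely $Q_b^{(m)}=\sum_{\alpha\vdash b}\frac{m!}{(m-b)!\,m_1!m_2!\cdots}(-2\Psi)^{b-l(\alpha)}\tilde\Theta[-\alpha]$, with $Q_0^{(0)}=1$ and $Q_b^{(0)}=0$ for $b\ge1$. Left‑multiplying $Y[-1]^m$ by $Y[-1]=\tilde\Theta[-1]+\tilde\xi[-1]$, carrying each new $\tilde\xi[-1]$ rightward past the $Q$'s via $\tilde\xi[-1]P=P\tilde\xi[-1]+D(P)$, and collecting powers of $\tilde\xi[-1]$ produces the recursion
$$Q_b^{(m+1)}=Q_b^{(m)}+\tilde\Theta[-1]\,Q_{b-1}^{(m)}+D(Q_{b-1}^{(m)}),$$
so it remains to verify that the proposed expressions satisfy it. Comparing the coefficient of a fixed monomial $\tilde\Theta[-\beta]$, $\beta\vdash b$, on both sides: $\tilde\Theta[-1]Q_{b-1}^{(m)}$ contributes (only when $m_1(\beta)\ge1$) the coefficient of $\tilde\Theta[-\beta']$ in $Q_{b-1}^{(m)}$, where $\beta'$ is $\beta$ with one part $1$ deleted; $Q_b^{(m)}$ contributes its own coefficient; and $D(Q_{b-1}^{(m)})$ contributes, for each part‑value $v\ge2$ occurring in $\beta$, the coefficient of $\tilde\Theta[-\beta^{(v\downarrow)}]$ in $Q_{b-1}^{(m)}$ multiplied by $-2v\Psi$ and by $m_{v-1}(\beta)+1$, where $\beta^{(v\downarrow)}$ lowers one part $v$ to $v-1$. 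Inserting the explicit factorials and powers of $-2\Psi$ and using $\sum_{v\ge1}v\,m_v(\beta)=b$, the three contributions collapse by elementary cancellation of factorials (treating $b\le m$ and $b=m+1$ separately) to the coefficient for $(m+1,\beta)$, which closes the induction.

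The step that takes genuine care is the $D(Q_{b-1}^{(m)})$ contribution: when $D$ raises a part of $\alpha$ to produce a prescribed part $v$ of $\beta$, several equal parts of $\alpha$ yield the same monomial $\tilde\Theta[-\beta]$, so the correct source coefficient must carry the multiplicity factor $m_{v-1}(\beta)+1$ — and it is exactly this factor that makes the factorial identity come out right; everything else is mechanical. Conceptually this is the classical identity $(\partial+f)^m=\sum_j\binom mj B_j(f,f',\dots)\,\partial^{m-j}$ in disguise: after the rescaling $H_k:=(-2\Psi)^{k-1}k!\,\tilde\Theta[-k]$, which turns $D$ into the pure shift $D(H_k)=H_{k+1}$, one gets $Y[-1]^m=\sum_j\binom mj B_j(H_1,\dots,H_j)\,(\tilde\xi[-1])^{m-j}$ with $B_j$ the complete Bell polynomials, and substituting $B_j=\sum_{\alpha\vdash j}\frac{j!}{\prod_i(i!)^{m_i}m_i!}\prod_i H_i^{m_i}$ recovers the stated form.

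Finally, for $X[-1]^m$ the same argument applies in reflected form. Relation (\ref{t2}) with $a=1$ gives $[\xi[-1],\Theta[-k]]=2(k+1)\Psi\,\Theta[-k-1]$, and since the asserted normal form for $X[-1]^m$ keeps the $\Theta$'s on the right, one now right‑multiplies by $X[-1]=\Theta[-1]+\xi[-1]$ and carries $\xi[-1]$ leftward past $\Theta[-\alpha]$, picking up $-[\xi[-1],\Theta[-\alpha]]$ and hence again the factor $-2(k+1)\Psi$; the combinatorics is the mirror image of the one above, so the identical verification closes the induction. Alternatively, the linear map $e_i\mapsto e_{-i}$ composed with the product‑reversing anti‑automorphism of $\Lambda\otimes U(\off\oplus\bC\tau)$ interchanges $X[-1]$ and $Y[-1]$ up to the sign change $\Psi\mapsto-\Psi$, converting one formula directly into the other.
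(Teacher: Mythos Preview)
Your argument is correct, but it proceeds quite differently from the paper's own proof. The paper expands $(\tilde\Theta[-1]+\tilde\xi[-1])^m$ into words, uses the iterated form $\tilde\xi[-1]^{p}\tilde\Theta[-1]=\sum_{s}{p\choose s-1}(-2\Psi)^{s-1}s!\,\tilde\Theta[-s]\,\tilde\xi[-1]^{p-s+1}$ to normal-order each word, and then computes the resulting coefficients indexed by \emph{compositions} $\bar a$ via a nested binomial sum; a separate combinatorial lemma (summing $\prod_s 1/(a_{i_1}+\dots+a_{i_s})$ over all rearrangements of the parts) then collapses compositions to partitions and produces the factor $1/(m_1!m_2!\cdots)$. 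You instead run an induction on $m$: writing $Y[-1]^m=\sum_b Q_b^{(m)}\,\tilde\xi[-1]^{\,m-b}$ and treating $\operatorname{ad}\tilde\xi[-1]$ as a derivation $D$ on the commutative subalgebra generated by $\Psi$ and the $\tilde\Theta[-k]$, you obtain the one-step recursion $Q_b^{(m+1)}=Q_b^{(m)}+\tilde\Theta[-1]Q_{b-1}^{(m)}+D(Q_{b-1}^{(m)})$ and verify the closed form against it, the key cancellation being $\sum_{v\ge1}v\,m_v(\beta)=b$. Your route is shorter and more conceptual---the Bell-polynomial remark makes clear why the multinomial factor appears---and it entirely bypasses the composition-level Lemma and the permutation-sum identity; the paper's route, on the other hand, is constructive (it \emph{derives} the coefficients rather than guessing and checking) and isolates those two intermediate combinatorial facts as results in their own right. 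The mirror argument for $X[-1]^m$ you give is exactly parallel to what the paper does in one line.
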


\begin{proof}
We prove the statement in two steps. First, we show that $Y[-1]^m$ and $X[-1]^m$  can be expressed as  linear combinations of ordered terms  labeled by {\it compositions} of integer numbers $a$, $1\le a\le m$.
\begin{lemma}\label{compos_lemma}
\begin{align*}
Y[-1]^m=\sum_{a=1}^{m}\sum_{\bar a} C(\bar a,m) (-2\Psi)^{ a-l}  
\tilde \Theta[-a_1] \tilde \Theta[-a_2]  \dots \tilde \Theta[-a_l] \, (\tilde \xi[-1])^{m- a},\\
X[-1]^m=\sum_{a=1}^{m}\sum_{\bar a} C(\bar a,m) (-2\Psi)^{ a-l}  
( \xi[-1])^{m- a}
 \Theta[-a_1] \ \Theta[-a_2]  \dots  \Theta[-a_l] ,
\end{align*}
 where the sum is taken over all ordered $l$-tuples $\bar a=(a_1,\dots, a_l)$,  with $0\le l \le m$, $a_i\ge 1$ 
and $\sum_{i=1}^l a_i=a\le m$. Here
 \begin{align*}
 C(\bar a,m)=
 \frac{m!}{(m-a)!}\prod_{s=1}^{l}\frac{a_s}{(a_1+\dots +a_s)}.
  \end{align*}
\end{lemma}

\begin{proof} 

$
Y[-1]^m=( \tilde \Theta[-1] +\tilde \xi[-1])^m
$
is the sum of monomials of the form
 \begin{align}\label{mon1}
  \tilde \xi[-1]^{p_1} \tilde \Theta[-1]\tilde \xi[-1]^{p_2}  \tilde \Theta[-1]\dots  \tilde \xi[-1]^{p_l} \tilde \Theta[-1]\tilde \xi[-1]^{p_{l+1}} ,
  \end{align}
where $p_i\ge 0$ and 
$
p_1+\dots+ p_{l+1}=m-l.
$
From (\ref{t3}), 
\begin{align}\label{comb}
 \tilde \xi[-1]^{p}  \tilde \Theta[-1]=\sum_{s=1}^{p+1}{p\choose {s-1}} (-2\Psi)^{s-1}  s! \tilde \Theta[-s] \tilde \xi[-1]^{p-s+1},
 \end{align}
 and we obtain  that   the terms of the  monomials   (\ref{mon1}) can be permuted to obtain that  $Y[-1]^m$ as a linear combination 
\begin{align}\label{mon2}
Y[-1]^m=
\sum_{\bar a} c(\bar a, m)(-2\Psi)^{ a-l} a_1! a_2!\dots a_l!
\tilde \Theta[-a_1] \tilde \Theta[-a_2]  \dots \tilde \Theta[-a_l] \, (\tilde \xi[-1])^{m- a}
\end{align}
with certain coefficients  $c(\bar a, m)$  and the sum   taken over all ordered  $l$-tuples $\bar a=(a_1,\dots, a_l)$  that satisfy $a_i\ge 1$\, $(i=1,\dots, l)$,  $a=\sum_{i=1}^l  a_i$. Our goal is  to compute $c(\bar a, m)$.
Fix the ordered  $l$-tuple  $(a_1,\dots, a_l) $ and consider the corresponding term  in the sum (\ref{mon2}). 
We use the following notations:
\begin{align*}
&A_1=a_1-1, \quad A_2=(a_1-1)+(a_2-1),\quad \dots,\quad   A_l=(a_1-1)+\dots + (a_l-1),\\
&P_1=p_1+\dots +p_{l+1}=m-l,\quad  P_2=p_2+\dots +p_{l+1},\quad \dots, \\
&P_{l+1}=p_{l+1},\quad P_{k}=0 \quad  (k>l+1).
\end{align*}
Then from (\ref{comb}),
\begin{align*}
c(\bar a, m)=&\sum_{p_i\ge 0, \, p_1+\dots + p_{l+1}=m-l} {p_1\choose {a_1-1}} {{p_1+p_2-A_1}\choose {a_2-1}} \dots {{\sum_{i=1}^l p_i- A_{l-1}}\choose {a_l-1}}\\
=&\sum_{p_i\ge 0, \, p_1+\dots + p_{l+1}=m-l} {{m-l -P_2}\choose {a_1-1}} {{ m-l -P_3 -A_1}\choose {a_2-1}} \dots {{m-l- P_{l+1}- A_{l-1}}\choose {a_l-1}}\\
=&
\sum_{p_{l+1}=0}^{m-l} 
{{m-l- P_{l+1}- A_{l-1}}\choose {a_l-1}}
\dots 
\sum_{p_{3}=0}^{m-l-P_{4}}
{{ m-l -P_3-A_1}\choose {a_2-1}}  \sum_{p_{2}=0}^{m-l-P_3}
 {{m-l -P_2}\choose {a_1-1}}.
 \end{align*}
We compute the sums of this expression  starting from the last one. Recall that
 $$\sum_{j=0}^{r}{j\choose k}={{r+1}\choose {k+1}} \quad {\text {and}} \quad 
 {{j-b}\choose a}{j\choose b}={j\choose {a+b}}{{a+b}\choose a}.
 $$
We get 
 $$
 \sum_{p_{2}=0}^{m-l-P_3}
 {{m-l -P_2}\choose {a_1-1}}={{m-l+1-P_3}\choose {a_1}}
 $$
and
 $$
  { {m-l -P_3- A_1} \choose {a_2-1}}  {{m-l+1-P_3}\choose {a_1}}
= {{m-l+1-P_3}\choose {a_1+a_2-1}} {{ a_1+a_2-1}\choose {a_2-1}}.
  $$
Similarly,
$$
\sum_{p_s=0}^{m-l-P_{s+1}}  {{m-l+s-2-P_s}\choose {a_1+\dots +a_{s-1}-1}}= {{m-l+s-1-P_{s+1}}\choose {a_1+\dots +a_{s-1}}}
$$
and 
\begin{align*}
{{m-l-P_{s+1}-A_{s-1}}\choose {a_s-1}}{{m-l+s-1-P_{s+1}}\choose {a_1+\dots +a_{s-1}}}={{m-l+s-1-P_{s+1}}\choose {a_1+\dots +a_{s}-1}}
{{a_1+\dots +a_s-1}\choose{a_s-1}}.
\end{align*}
By induction, 
\begin{align*}
c(\bar a, m)=& {{m}\choose {a_1+\dots +a_l}}{{a_1+\dots +a_l-1}\choose{a_l-1}}\dots {{ a_1+a_2-1}\choose {a_2-1}}\\
&=\frac{m!}{(m-a)!}\prod_{s=1}^{l}\frac{1}{(a_1+\dots +a_s) (a_s-1)!}.
\end{align*}
Set  $C(\bar a, m)=a_1! a_2!\dots a_l!\,c(\bar a, m)$, and 
 the   statement of Lemma \ref{compos_lemma} follows from (\ref{mon2}).
 
For $X[-1]^m$ the same arguments apply since
\begin{align*}
 \Theta[-1] \xi[-1]^{p}  =\sum_{s=1}^{p+1}{p\choose {s-1}} (-2\Psi)^{s-1}  s! \xi[-1]^{p-s+1}
 \Theta[-s].
 \end{align*}
\end{proof}
 Observe now  that  $\tilde \Theta[-a]$ and  $\tilde \Theta[-b]$ commute for $a,b\ge 1$.
 Thus the   factors of  any  monomial $\tilde \Theta[-a_1] \tilde \Theta[-a_2]  \dots \tilde \Theta[-a_l]$
 can be rearranged into a monomial  $\tilde \Theta[-a_{i_1}] \tilde \Theta[-a_{i_2}]  \dots \tilde \Theta[-a_{i_l}]$ so that  $(a_{i_1}\ge\dots\ge  a_{i_l}\ge1)$ is a {\it partition}  of $a$, and 
\begin{align*}
Y[-1]^m&=\sum_{a=1}^{m}\sum_{\alpha\vdash  a}  \frac{m!}{(m-a)!}C(\alpha)
\prod_{s=1}^{l}   a_s \,   (-2\Psi)^{ a-l}  
\tilde \Theta[-\alpha] \, (\tilde \xi[-1])^{m- a},
\end{align*}
where the  sum is taken over all  partitions $\alpha=(a_1\ge a_2\ge \dots \ge  a_l\ge 1)$ of $a$, and 
$$
 C(\alpha)=\sum  \frac {1 }  {(a_{i_1}) (a_{i_1}+a_{i_2})\dots ( k_1 a_1\dots +k_la_{l})},
$$
--   the sum is taken over all  distinct permutations $(a_{i_1},\dots, a_{i_l}) $    of the  parts  of $\alpha$.
 
 The following lemma allows to compute the coefficients  of that linear combination. 
 \begin{lemma}\label{calpha}
 Let $\{a_1, a_2,\dots,  a_r)$ be a set of distinct positive numbers and let\\ $\alpha=(a_1,\dots a_1,  a_2,\dots,a_2,\,  \dots \, a_r,\dots, a_r)$
 be an ordered $l$-tuple, where the part $a_i$ appears with multiplicity $k_i$
(therefore,  $l= k_1+\dots + k_r$).
Then 
\begin{align}\label{C1}
\sum \frac{1}{(a_{i_1}) (a_{i_1}+a_{i_2})\dots ( k_1 a_1+\dots +k_ra_{r})} =\frac{1}{k_1! a_1^{k_1}\dots k_r!a_r^{k_r}},
\end{align}
where the sum on the left-hand side is taken over  distinct permutations $(a_{i_1}, \dots, a_{i_{l}})$  of $\alpha=(a_1,\dots a_1,  a_2,\dots,a_2,\,  \dots \, a_r,\dots, a_r)$.
\end{lemma}

\begin{proof}
  Denote the sum on the left-hand side of (\ref{C1}) as $C(\alpha)$.
  Assume first that 
  $k_i=1$ ($i=1,\dots, r$).
Then  for $r=2$,
\begin{align}\label{int}
C((a_1, a_2))=
\frac{1}{a_1(a_1+a_2)}+\frac{1}{a_2(a_1+a_2)}= \frac{1}{a_1a_2}.
\end{align}
 By induction for general $\alpha$ with all  distinct parts we show that 
\begin{align}\label{C}
C(\alpha)&=\sum_{\sigma\in S_r } \prod_{s=1}^r\frac{1}{(a_{\sigma(1)}+\dots +a_{\sigma(s)})}\\
& =
\sum_{k=1}^r\,  \sum_{\sigma\in S_r, \sigma(r)=k }  \frac{1}{(a_{\sigma(1)}+\dots +a_{\sigma(r-1)}+a_k)} \prod_{s=1}^{r-1}\frac{1}{(a_{\sigma(1)}+\dots +a_{\sigma(s-1)})} \\
&=\sum_{k=1}^r \,\sum_{\sigma\in S_r, \sigma(r)=k }  \frac{1}{(a_{\sigma(1)}+\dots +a_{\sigma(r-1)}+a_k)}\frac{1}{a_1 \dots \hat{a}_k\dots a_r }
=\frac{1}{a_1 \dots a_r }.
\end{align}
($\hat{a}_k$ means that the factor ${a}_k$  is omitted).
Now  suppose that not all of $k_i=1$ in $\alpha$. For simplicity of the argument, let us assume that   ${k_1}\ne 1$, and the rest of  $k_i= 1$.
Then consider $\alpha_\varepsilon= (a_1+\varepsilon_1, a_1+\varepsilon_2,\dots, a_{1}+\varepsilon_{k_1}, a_{2},\dots, a_r)$ with such values of $\varepsilon_i$ that all the terms of $\alpha_\varepsilon$ are distinct.  $C(\alpha_\epsilon)$ is a rational function of $(\varepsilon_1,\dots \varepsilon_{k_1})$. When all $\varepsilon_i\to 0 $,
$$
\frac{1} {(a_1+\varepsilon_1)\dots (a_{k_1}+\varepsilon_{k_1}) a_{k_1+1}\dots a_r}\to \frac{1}{(a_1)^{k_1} a_{k_1+1}\dots a_r}.
$$
On the other hand, 
$$
C(\alpha_\epsilon)
\to \sum_{\sigma\in S_{r-k_1+1} } \prod_{s=1}^r\frac{k_1!}{(a_{\sigma(1)}+\dots +a_{\sigma(s)})},
$$
where the sum is taken over all permutations of the set $\{1,2,\dots, r\} $. 
By generalizing this argument one can show that if $\alpha$   has distinct parts $\{ a_1, \dots, a_r\}$ with the corresponding multiplicities
 $\{ k_1,\dots, k_r \}$,
the value of  $C(\alpha)$ is given by the formula
$
C(\alpha)={1}/({k_1! a_1^{k_1}\dots k_r!a_r^{k_r}}).
$
\end{proof}
Applying Lemma \ref{calpha} and  rewriting the partition $\alpha$ in the form  $\alpha=( 1^{m_1}, 2^{m_2},\dots)$, we get the statement of Proposition \ref{yx}.
\end{proof}
\section{Formula for  $\Omega[-1]^n$}
We use notation  $\text{ad}_\tau$ for the operator $f\mapsto [\tau,f]$  acting on  $U(\off)$.
\begin{theorem}\label{thm1} 
For  the Lie algebra $\of$,
\begin{align*}
\Omega[-1]^n &= \sum_{l=0}^{\lfloor n/2\rfloor} \sum_{\substack{a,b,\\ 2l\le a+b\le n}}\frac{2^{n-2l} n!}{(n-a-b)!} 
 \left(\sum_{\alpha\vdash a, l(\alpha)=l}\frac{1}{m_1! m_2!\dots }
  \tilde  \Theta[-\alpha]   \right)\\
&\quad \times(-\Psi)^{ a+b-2l} \left( (\Xi[-1] -\Psi \text{ad}_\tau)^{n-a-b}\cdot 1 \right)\,
\left(\sum_{\beta\vdash b,l(\beta)=l}
\frac{1}{m^\prime_1!m^\prime_2!\dots }
  \Theta[-\beta]\right),
\end{align*}
where  we use the notations $\alpha=(1^{m_1}, 2^{m_2},\dots )\vdash a$ and  $\beta=(1^{m^\prime_1}, 2^{m^\prime_2},\dots )\vdash b$ .

\end{theorem}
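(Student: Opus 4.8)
The plan is to expand $\Omega[-1]=X[-1]+Y[-1]$, use that the two forms commute by \eqref{xy} to write $\Omega[-1]^n=\sum_{m+k=n}\binom nm Y[-1]^mX[-1]^k$, substitute the two formulas of Proposition \ref{yx}, and simplify. After multiplying $Y[-1]^m$ by $X[-1]^k$ (with the $\tilde\Theta$-part appearing on the left and the $\Theta$-part on the right), using the centrality of the form $\Psi$, and re-indexing $p=m-a$, $q=k-b$, the element $\Omega[-1]^n$ becomes a sum over $a,b\ge0$ with $a+b\le n$ and over partitions $\alpha\vdash a$, $\beta\vdash b$ of
\[
\frac{(-2\Psi)^{a-l(\alpha)+b-l(\beta)}}{m_1!m_2!\cdots\,m^\prime_1!m^\prime_2!\cdots}\;\tilde\Theta[-\alpha]\,\Bigl(\sum_{p+q=n-a-b}\frac{n!}{p!\,q!}\,(\tilde\xi[-1])^{p}(\xi[-1])^{q}\Bigr)\,\Theta[-\beta],
\]
where I have used the cancellation $\binom nm\frac{m!}{(m-a)!}\frac{k!}{(k-b)!}=\frac{n!}{(m-a)!\,(k-b)!}$.

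The heart of the argument is the identity
\[
\sum_{p+q=N}\binom Np(\tilde\xi[-1])^{p}(\xi[-1])^{q}=2^{N}\bigl((\Xi[-1]-\Psi\,\text{ad}_\tau)^{N}\cdot 1\bigr),\qquad N\ge0,
\]
which I would prove by induction on $N$. The left-hand side $T_N$ satisfies, by Pascal's rule, $T_{N+1}=\tilde\xi[-1]\,T_N+T_N\,\xi[-1]$ with $T_0=1$. For the right-hand side set $Z_N=(\Xi[-1]-\Psi\,\text{ad}_\tau)^{N}\cdot 1$. Since $\text{ad}_\tau$ is a derivation with $\text{ad}_\tau\Psi=0$ and $\text{ad}_\tau\Xi[-r]=r\,\Xi[-r-1]$, and since $[\Xi[-r],\Xi[-s]]=0$, an easy induction shows that every $Z_N$ lies in the \emph{commutative} subalgebra generated by $\Psi$ and the forms $\Xi[-r]$; in particular $Z_N$ commutes with $\Xi[-1]$. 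Hence, writing $\tilde\xi[-1]=\Xi[-1]-2\Psi\tau$, $\xi[-1]=\Xi[-1]+2\Psi\tau$ and using the centrality of $\Psi$,
\[
\tilde\xi[-1]\,(2^NZ_N)+(2^NZ_N)\,\xi[-1]=2^{N}\bigl(2\,\Xi[-1]Z_N-2\Psi[\tau,Z_N]\bigr)=2^{N+1}(\Xi[-1]-\Psi\,\text{ad}_\tau)Z_N=2^{N+1}Z_{N+1},
\]
so $2^NZ_N$ satisfies the same recursion with the same initial value, which proves the identity. Applying it with $N=n-a-b$ turns the inner bracket above into $\frac{n!}{(n-a-b)!}\,2^{\,n-a-b}\bigl((\Xi[-1]-\Psi\,\text{ad}_\tau)^{\,n-a-b}\cdot 1\bigr)$.

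Next I would discard the ``unbalanced'' terms $l(\alpha)\ne l(\beta)$ by a parity count in $\Lambda$. Every summand produced so far is homogeneous of degree $2n$ in $\Lambda$; in any of its monomials, the generators $e_i$ with $i>0$ come $2$ from each factor $\tilde\Theta[-a_s]$, $1$ from each of the $a-l(\alpha)+b-l(\beta)$ factors $\Psi$, and $n-a-b$ from $(\Xi[-1]-\Psi\,\text{ad}_\tau)^{n-a-b}\cdot 1$ (which is homogeneous of degree $n-a-b$ inside that subalgebra), while the factors $\Theta[-b_t]$ contribute none; the total is $n+l(\alpha)-l(\beta)$. As $\Lambda$ has only $n$ generators of positive index and they square to zero, any monomial using more than $n$ of them is zero, and by the symmetric count on the negative indices (the degree being $2n$) the summand also vanishes when $l(\alpha)<l(\beta)$. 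Thus only $l(\alpha)=l(\beta)=:l$ survives. For these, $(-2\Psi)^{a-l+b-l}=2^{\,a+b-2l}(-\Psi)^{\,a+b-2l}$, the scalar becomes $\frac{2^{\,n-2l}n!}{(n-a-b)!}$, and $l\le a$, $l\le b$, $a+b\le n$ force $2l\le a+b\le n$ and $l\le\lfloor n/2\rfloor$; collecting the $\tilde\Theta[-\alpha]$ over $\alpha\vdash a$ with $l(\alpha)=l$ and the $\Theta[-\beta]$ over $\beta\vdash b$ with $l(\beta)=l$ then gives precisely the asserted formula.

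The main obstacle is the identity in the second paragraph: $\tilde\xi[-1]$ and $\xi[-1]$ do not commute, so $\sum_{p+q=N}\binom Np(\tilde\xi[-1])^p(\xi[-1])^q$ is a genuinely noncommutative expression still containing $\tau$, and the content of the lemma is that it collapses to the $\tau$-free quantity $2^N(\Xi[-1]-\Psi\,\text{ad}_\tau)^N\cdot 1$. The point that makes this go through — and the structural fact worth isolating — is that $(\Xi[-1]-\Psi\,\text{ad}_\tau)^N\cdot 1$ never leaves the commutative subalgebra generated by $\Psi$ and the $\Xi[-r]$, so that all the $\tau$'s reorganise into a single commutator and the recursion $T_{N+1}=\tilde\xi[-1]T_N+T_N\xi[-1]$ closes. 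A secondary, but genuinely error-prone, task is carrying the multinomial coefficients of Proposition \ref{yx} and the sign/degree count above through accurately.
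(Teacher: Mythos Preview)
Your proof is correct and follows essentially the same route as the paper: expand $\Omega[-1]^n$ via $X[-1]+Y[-1]$, substitute Proposition~\ref{yx}, prove the key identity $\sum_{p+q=N}\binom{N}{p}(\tilde\xi[-1])^p(\xi[-1])^q=2^N(\Xi[-1]-\Psi\,\text{ad}_\tau)^N\cdot 1$ via the recursion $T_{N+1}=\tilde\xi[-1]T_N+T_N\xi[-1]$ together with the observation that the right-hand side stays in the commutative subalgebra generated by $\Psi$ and the $\Xi[-r]$, and then use a degree argument to force $l(\alpha)=l(\beta)$. Your explicit count of positive-index generators to justify $l(\alpha)=l(\beta)$ is more detailed than the paper's one-line appeal to ``full degree,'' but the underlying idea is the same.
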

\begin{example}
For $n=2$ the partitions that contribute to the sum of the formula are $\alpha=\beta=\emptyset$ and $\alpha=\beta=(1)$. Accordingly, in ${\mathfrak o}_{4}$ we get
\begin{align*}
\Omega[-1]^2=4(\Xi[-1]-\Psi\text{ad}_\tau)^2\cdot 1 + 2 \tilde  \Theta[-1]  \Theta[-1].  
\end{align*}
For $n=3$ the partitions that contribute to the sum of the formula are $\alpha=\beta=\emptyset$; $\alpha=\beta=(1)$;   $\alpha=(2), \beta=(1)$ or  $\alpha=(1), \beta=(2)$. Accordingly, in ${\mathfrak o}_{6}$ we get
\begin{align*}
\Omega[-1]^3&=8(\Xi[-1]-\Psi \text{ad}_\tau)^3\cdot 1 + 12 \tilde  \Theta[-1] \,((\Xi[-1]-\Psi\text{ad}_\tau)\cdot 1)\,  \Theta[-1]\\
 & \quad
-12 \Psi( \tilde  \Theta[-2]   \Theta[-1] +\tilde  \Theta[-1]   \Theta[-2] ).
\end{align*}

\end{example}
\begin{proof}
\begin{align*}
\Omega[-1]^n &= \sum_{m=0}^n{n\choose m} Y[-1]^mX[-1]^{n-m}\\
&=\sum_{m=0}^n \sum_{a=0}^{m}\sum_{b=0}^{n-m} \sum_{\alpha\vdash a\beta\vdash b}  {n\choose m}\frac{m!\,(n-m)!}{ (m-a)!(n-m-b)! m_1! m_2!\dots m^\prime_1!m^\prime_2!\dots }\\
&\quad  (-2\Psi)^{ b+a-l(\alpha)-l(\beta)}  
\tilde \Theta[-\alpha] (\tilde \xi[-1])^{m- a}( \xi[-1])^{n-m- b}\Theta[-\beta]\\
&=\sum_{0\le a+b\le n}\sum_{\alpha, \beta}
\frac{n!\,}{ (n-a-b)!m_1! m_2!\dots m^\prime_1!m^\prime_2!\dots }(-2\Psi)^{ a+b-l(\alpha)-l(\beta)}\\
& \,\times \tilde \Theta[-\alpha]  \left(\sum_{m=a}^{n-b}  {{n-a-b}\choose {m-a}}(\tilde \xi[-1])^{m-a}(\xi[-1])^{n-m- b}\right)\Theta[-\beta].
\end{align*}
Denote as 
$$
P_r=\sum_{k=0}^{r}  {{r}\choose {k}}(\tilde \xi[-1])^{k}(\xi[-1])^{r-k}, \quad P_0=1.
$$
One has:
\begin{align*}
P_r&= \tilde \xi [-1]P_{r-1}+P_{r-1}\xi[-1]
=(\Xi[-1] -2\Psi\tau)P_{r-1}+P_{r-1}(\Xi[-1] +2\Psi\tau)\\
&=\Xi[-1] P_{r-1}+P_{r-1}\Xi[-1] -2\Psi[\tau, P_{r-1}].
\end{align*}
If $P_{r-1}$ is a sum of  monomials in $\Psi$ and  $\Xi[s] $ with some negative values of  $s$, then $[\tau, P_{r-1}]$ is also a sum of monomials in $\Psi$ and  $\Xi[s]$ with some negative values of $s$. This,  together with the  fact that $P_1=2\Xi[-1]$,
implies that all $P_r$ are linear combinations  of monomials in $\Psi$ and  $\Xi[s]$ ($s<0$),  each of them  commutes with $\Xi[-1] $ and  we can write
\begin{align*}
P_r=2(\Xi[-1] P_{r-1}-\Psi[\tau, P_{r-1}])= 2(\Xi[-1] -\Psi\text{ad}_\tau)\, P_{r-1}=2^r(\Xi[-1] -\Psi\text{ad}_\tau)^r\cdot 1.
\end{align*}
Thus
\begin{align*}
\left(\sum_{m=a}^{n-b}  {{n-a-b}\choose {m-a}}(\tilde \xi[-1])^{m-a}(\xi[-1])^{n-m- b}\right)=
   \left(   2^{n-a-b}(\Xi[-1] -\Psi\text{ad}_\tau)^{n-a-b}\cdot 1 \right).
   \end{align*}
   Also  notice that since $\Omega[-1]^n$ is a  form of full degree,  the number of parts $\alpha$ and $\beta$ in non-zero terms is the same: $l(\alpha)=l(\beta)=l$. 
   This completes the proof of  the theorem.
\end{proof}

\section{Explicit formula for Pfaffian  $\Pf  \tilde F[-1]$}

  Pfaffian  $\text{Pf}\,\tilde F[-1]$ is given by  the coefficient of the monomial  $e_{-n}e_{-n+1}\dots e_{n-1}e_{n}$
  in the  form  $\Omega[-1]^n$ . 
 One can use Lemma \ref{pfdet} below to interpret through Pfaffian- and determinant-like elements the  expressions $\tilde \Theta[-\alpha]$,  $(\Xi[-1] -\Psi \text{ad}_\tau)^{r}$, $\Theta[-\beta]$
in the statement of  Theorem \ref{thm1}. 
 
 For  subsets $I, J$ of $[-n;n]$ such that $|I|=|J|=l$ we denote as  $\Phi[-s]_{IJ}$  an $l\times l$-submatrix of the matrix $\Phi[-s]=(F[-s]+{\text{ad}_\tau}\cdot Id )$
\begin{align*}
\Phi_{IJ} [-s]= (F_{i,j}[-s]+ {\text{ad}_\tau} \delta_{ij})_{i\in I, j\in J}.
\end{align*}
and set 
\begin{align*}
 \text{det} \, \Phi_{IJ} [-1]=\sum_{\sigma\in S_l}\text{sgn}\,(\sigma) \Phi_{i_{\sigma (1)}, j_1}\dots \Phi_{i_{\sigma(l)}, j_l}.
\end{align*}

Also we define  for  subsets $I=( i_1<\dots <i_l)$    of $\{-n,\dots, -1\}$ the elements
\begin{align}
\text{Pf}\, \Phi_{+I-I} [-\beta]&=\frac{1}{2^l l!}\sum_{\sigma\in S_{2l}} \text{sgn}\,(\sigma) F_{i_{\sigma(1)}, -i_{\sigma (2)}}[-\beta_1]\dots  F_{i_{\sigma(2l-1)}, -i_{\sigma (2l)}}[-\beta_l],\label{pf2}\\
\text{Pf}\, \Phi_{-I+I} [-\alpha]&=\frac{1}{2^l l!}\sum_{\sigma\in S_{2l}} \text{sgn}\,(\sigma) F_{-i_{\sigma(1)}, i_{\sigma (2)}}[-\alpha_1]\dots  F_{-i_{\sigma(2l-1)}, i_{\sigma (2l)}}[-\alpha_l]. \label{pf3}
\end{align}
\begin{lemma}\label{pfdet}
For  partitions $\alpha$, $\beta$ and for   $r\in \bZ_+$,
\begin{align*}
\tilde \Theta[-\alpha] &=2^l l!\sum_{-n\le i_1<\dots <i_{2l}\le -1} e_{-i_1}\dots  e_{-i_{2l}}\,  \text{Pf}\, \Phi_{-I+I} [-\alpha],\\
 \Theta[-\beta] &=2^l l!\sum_{-n\le i_1<\dots <i_{2l}\le -1} e_{i_1}\dots  e_{i_{2l}} \, \text{Pf}\,  \Phi_{+I-I} [-\beta],\\
 (\Xi[-1] -\Psi \text{ad}_\tau)^{r}\cdot 1&= r! \sum_ {\substack{
  -n\le i_1<\dots<i_r\le -1\\
   -n\le j_1<\dots<j_r\le -1}}
 e_{i_1}\dots e_{i_r}e_{-j_r}\dots e_{-j_1}\, (\text{det}\, \Phi_{I,J} [-1])\cdot1. 
\end{align*}

 \end{lemma}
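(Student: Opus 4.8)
\emph{Overall approach.} I would prove the three identities of Lemma~\ref{pfdet} by the same scheme: expand each left-hand side as a multiple sum over index tuples in $\{-n,\dots,-1\}$, use $e_ie_j=-e_je_i$ to discard every term in which two exterior generators coincide, sort the surviving exterior monomials into normal order while tracking the resulting sign, and finally recognize the induced antisymmetrized sum of $U(\off)$-coefficients as the stated Pfaffian- or determinant-type element. For the first two identities this is pure bookkeeping; for the third one needs in addition that the matrix $\Phi[-1]$, restricted to the block $i,j\in\{-n,\dots,-1\}$, is a Manin matrix.

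\emph{The $\tilde\Theta$- and $\Theta$-identities.} Using $F_{i,j}=-F_{-j,-i}$ rewrite $\tilde\Theta[-s]=\sum_{a,b=-n}^{-1}e_{-a}e_{-b}\,F_{-a,b}[-s]$ and $\Theta[-s]=\sum_{a,b=-n}^{-1}e_a e_b\,F_{a,-b}[-s]$. Multiplying out $\tilde\Theta[-\alpha]=\tilde\Theta[-\alpha_1]\cdots\tilde\Theta[-\alpha_l]$ and keeping only the terms whose $2l$ exterior generators are pairwise distinct, the exterior part of a surviving term is indexed by a $2l$-subset $I=\{i_1<\dots<i_{2l}\}\subset\{-n,\dots,-1\}$ together with a permutation $\sigma\in S_{2l}$ recording the order in which the elements of $I$ occur; reordering $e_{-i_{\sigma(1)}}\cdots e_{-i_{\sigma(2l)}}$ into $e_{-i_1}\cdots e_{-i_{2l}}$ contributes $\text{sgn}(\sigma)$. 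Collecting terms,
\[
\tilde\Theta[-\alpha]=\sum_{I}e_{-i_1}\cdots e_{-i_{2l}}\sum_{\sigma\in S_{2l}}\text{sgn}(\sigma)\,F_{-i_{\sigma(1)},i_{\sigma(2)}}[-\alpha_1]\cdots F_{-i_{\sigma(2l-1)},i_{\sigma(2l)}}[-\alpha_l],
\]
and the inner sum is $2^l l!\,\text{Pf}\,\Phi_{-I+I}[-\alpha]$ by the definition (\ref{pf3}); the computation for $\Theta[-\beta]$ is identical with (\ref{pf2}) in place of (\ref{pf3}). (I read (\ref{pf2})--(\ref{pf3}) with $I$ a $2l$-element subset, matching the subsets that actually occur in the Lemma.) The only things to watch are the assignment of the spectral parameters $-\alpha_s$ to the correct argument of each $F$ and the sorting signs; no commutation relations are used.

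\emph{The $(\Xi[-1]-\Psi\,\text{ad}_\tau)^r$-identity.} From the displayed formulas for $\Psi$ one has $-\Psi=\sum_{k=-n}^{-1}e_k e_{-k}$, so $\Xi[-1]-\Psi\,\text{ad}_\tau=\sum_{i,j=-n}^{-1}e_i e_{-j}\,\Phi_{ij}[-1]$, where $\Phi_{ij}[-1]=F_{i,j}[-1]+\delta_{ij}\,\text{ad}_\tau$ is viewed as an operator on $U(\off)$. Since the exterior factors commute with these operators, iterating gives
\[
(\Xi[-1]-\Psi\,\text{ad}_\tau)^r\cdot 1=\sum_{i_1,j_1,\dots,i_r,j_r=-n}^{-1}e_{i_1}e_{-j_1}\cdots e_{i_r}e_{-j_r}\;\Phi_{i_1j_1}[-1]\cdots\Phi_{i_rj_r}[-1]\cdot 1 .
\]
A surviving term forces $I=\{i_1<\dots<i_r\}$ and $J=\{j_1<\dots<j_r\}$ to be $r$-element subsets; writing the index tuples as $\sigma,\rho\in S_r$ acting on the sorted listings of $I$ and $J$ and re-ordering the exterior monomial into normal order $e_{i_1}\cdots e_{i_r}e_{-j_r}\cdots e_{-j_1}$ contributes the sign $\text{sgn}(\sigma)\text{sgn}(\rho)$ (the two signs $(-1)^{\binom r2}$ that arise, one from separating the $e_i$'s from the $e_{-j}$'s and one from reversing the $e_{-j}$ block, cancel). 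Hence
\[
(\Xi[-1]-\Psi\,\text{ad}_\tau)^r\cdot 1=\sum_{I,J}e_{i_1}\cdots e_{i_r}e_{-j_r}\cdots e_{-j_1}\sum_{\sigma,\rho\in S_r}\text{sgn}(\sigma\rho)\,\Phi_{i_{\sigma(1)},j_{\rho(1)}}[-1]\cdots\Phi_{i_{\sigma(r)},j_{\rho(r)}}[-1]\cdot 1 ,
\]
and it remains to identify the inner double sum with $r!\,\text{det}\,\Phi_{IJ}[-1]\cdot 1$.

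\emph{The Manin-matrix step, and the main obstacle.} I claim $\big(\Phi_{ij}[-1]\big)_{i,j\in\{-n,\dots,-1\}}$ is a Manin matrix, i.e.\ $[\Phi_{ij}[-1],\Phi_{kj}[-1]]=0$ and $[\Phi_{ij}[-1],\Phi_{kl}[-1]]=[\Phi_{kj}[-1],\Phi_{il}[-1]]$ for all $i,j,k,l\in\{-n,\dots,-1\}$. This is checked directly from the commutation relations of $\off$ together with $[\text{ad}_\tau,F_{i,j}[-1]]=F_{i,j}[-2]$: since $i,j,k,l$ all lie in $\{-n,\dots,-1\}$, the terms carrying $\delta_{j,-l}$, $\delta_{i,-k}$ and the central term in $[F_{i,j}[-1],F_{k,l}[-1]]$ all vanish, and the discrepancy between the two sides collapses to sums of pairs $F[-2]+F[-2]=0$ coming from $F_{i,j}=-F_{-j,-i}$. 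For a Manin matrix the column determinant is alternating under permutation of the columns (reduce to an adjacent transposition and apply the cross relation), so for each fixed $\rho$ the inner $\sigma$-sum equals $\text{sgn}(\rho)\,\text{det}\,\Phi_{IJ}[-1]$, and summing over $\rho$ with sign gives $r!\,\text{det}\,\Phi_{IJ}[-1]$; inserted into the previous display this is the third formula. The routine parts are the exterior-algebra bookkeeping and the reduction to the double sum over $S_r\times S_r$; the delicate step is verifying the Manin relations for $\Phi[-1]$ on the negative block — in particular seeing that no central contribution and no mismatched term survives — and then invoking (or re-proving) the column-alternating property of column determinants of Manin matrices.
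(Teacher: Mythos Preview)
Your argument is correct. For the first two identities your expansion-and-sort bookkeeping is exactly what the paper means by ``follow from the definitions (\ref{pf2})--(\ref{pf3}).'' For the third identity the paper and you reach the same endpoint through equivalent but differently packaged arguments.

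The paper writes $\Xi[-1]-\Psi\,\text{ad}_\tau=\sum_{j=-n}^{-1}\eta_j e_{-j}$ with $\eta_j=\sum_{i=-n}^{-1}e_i\,\Phi_{ij}[-1]$, checks that $\eta_j\eta_k+\eta_k\eta_j=0$, and then invokes Proposition~4.5 of \cite{TH}, whose proof turns anticommuting one-forms into a column determinant. You instead expand directly, reduce to the double sum over $S_r\times S_r$, verify that the negative block of $\Phi[-1]$ is a Manin matrix, and use the column-alternating property of column determinants of Manin matrices to collapse the double sum to $r!\,\text{det}\,\Phi_{IJ}[-1]$. The two formulations are the same fact in different clothing: expanding $\eta_j\eta_k+\eta_k\eta_j$ in the exterior variables, the anticommutation is exactly $\sum_{i,i'}e_ie_{i'}[\Phi_{ij},\Phi_{i'k}]=0$, which is a consequence of (indeed equivalent to, after antisymmetrizing in $i,i'$) the Manin cross relation you check. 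What your route buys is self-containment---you do not need to consult \cite{TH}---at the cost of importing or re-proving the standard Manin-matrix lemma on column permutations. What the paper's route buys is brevity and a clean reference. Your sign bookkeeping (the two $(-1)^{\binom r2}$'s cancelling) and your verification of the Manin relations on the negative block are both fine; in the latter, note that once you use $F_{-j,-k}=-F_{k,j}$ the cross relation $[\Phi_{ij},\Phi_{kl}]=[\Phi_{kj},\Phi_{il}]$ is a four-term cancellation among $F[-2]$'s and does not require any further input.
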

\begin{proof}
The first and the second equalities follow from the definitions (\ref{pf2}-\ref{pf3}), and the proof  of the third equality  repeats  word-to-word the proof of  Proposition 4.5.  in \cite{TH} if we observe that
$\Xi[-1]-\Psi \text{ad}_\tau=\sum_{j=-n}^{-1}\eta_j e_{-j}$, where $\eta_{j}=\sum_{i=-n}^{-1}e_i(F_{i,j}[-1]+\text{ad}_\tau\delta_{ij})$ and satisfy  $\eta_j\eta_k+\eta_k \eta_j=0$.
\end{proof}
\begin{example}\label{ex_2}
 For $n=2$ one gets 
$
4(\Xi[-1]-\Psi \text{ad}_\tau)^2\,=\, 8 e_{-2}e_{-1} e_{1}e_{2}\, \text{det}(\Phi[-1])\cdot 1
$
and $\Theta[-1]=2e_{-2}e_{-1}F_{-2,1}[-1]$,
 $\tilde \Theta[-1]=2e_{1}e_{2}F_{1,-2}[-1]$. Therefore, 
\begin{align*}
& \Pf  \tilde F[-1]\,=\,\text{det}(\Phi[-1])\cdot 1-F_{1,-2}[-1]F_{-2,1}[-1]\\
&=\, (F_{-2,-2}[-1]F_{-1,-1}[-1] - F_{-1,-2}[-1]F_{-2,-1}[-1]  + F_{1,-2}[-1]F_{-2,1}[-1] + F_{-1,-1}[-2]).
\end{align*}
\end{example}
This example illustrates that Theorem \ref{thm1}  can be used  to compute the Pfaffian $\Pf  \tilde F[-1]$:   we find  by  that theorem the value of $\Omega[-1]^n$, and  then by Lemma \ref{omn},  the coefficient of the monomial  $e_{-n} e_{-n+1}\dots e_{n-1}e_{n}$  in  $\Omega[-1]^n$ is  $2^n n!\Pf  \tilde F[-1]$. For the sake of completeness we write explicitly  the general  formula for that coefficient (c.f. Theorem 4.7 in \cite {TH}), though it involves more technical notations.  The statements of Theorems \ref{thm1}  above and \ref{thm2}  below are equivalent.


If  $I=( i_1<\dots <i_k)$  is  a  string   of elements of  subset  of    of $\{-n,\dots, -1\}$ written in increasing order,  denote as  $- I= ( -i_1>\dots >-i_k)$.
Consider subsets  $J, I_1,I_2,J_1,J_2$  of  $\{-n,\dots,-1\}$ that satisfy the properties:
\begin{align}
I_1 \sqcup J_1 \sqcup J= J \sqcup J_2 \sqcup I_2=\{-n,\dots,-1\},\label{subsets1}\\
|I_1|=|I_2|=2l, \quad  |J_1|=|J_2|=n-a-b.\label{subsets2}
\end{align}

Write the elements  of  strings    $[J,J_2, I_2]$   exactly   in that order.   We denote  by 
$ \text{sgn}\, ({ J,J_2, I_2})$  the sign of the  permutation of   the elements of this string that puts  them in the  increasing order  $( -n,\dots, -1 )$.
Similarly,  $\text{sgn}\, ( {-I_1,-J_1,-J})$  is the sign of the  permutation of   the elements of this string that puts  them in the  increasing order  $( 1,\dots, n )$.

 \begin{theorem}\label{thm2}
  \begin{align}
 \Pf  \tilde F[-1]&=\sum_{l=0}^{\lfloor n/2\rfloor}
\sum_{\substack{ a,b\\0\le a+b\le n }}\sum_{I_1,J_1,I_2,J_2, J } 
\text{sgn}\, ( {J,J_2,I_2})\text{sgn}\, ( {-I_1,-J_1,-J})\notag\\
&\quad \times \left( 
\sum_{\alpha\vdash a, l(\alpha)=l}\frac{l!}{m_1! m_2!\dots}
\text{Pf}\, \Phi_{-I_1+I_1} [-\alpha]\right)
\notag\\
&\quad \times 
\left(
{(a+b-2l)! } 
 (\text{det}\, \Phi_{J_2,J_1} [-1]\cdot1)
 \right)
 \left(  \sum_{\beta\vdash b, l(\beta)=l}\frac{l!}{m^\prime_1! m^\prime_2!\dots}\Phi_{+I_2-I_2} [-\beta]\right),\label{uzhas}
 \end{align}
the third sum  is taken over subsets $J, I_1,I_2,J_1,J_2$  of  $\{-n,\dots,-1\}$ that satisfy the properties (\ref{subsets1}, \ref{subsets2}).
 \end{theorem}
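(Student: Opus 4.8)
The plan is to read off Theorem \ref{thm2} from Theorem \ref{thm1} by extracting the coefficient of the top exterior monomial $e_{-n}e_{-n+1}\cdots e_{-1}e_1\cdots e_n$. By Lemma \ref{omn} the coefficient of this monomial in $\Omega[-1]^n$ is $2^n n!\,\Pf\tilde F[-1]$, so it suffices to compute the same coefficient on the right-hand side of Theorem \ref{thm1}. First I would substitute into that formula the three identities of Lemma \ref{pfdet} for $\tilde\Theta[-\alpha]$, $(\Xi[-1]-\text{ad}_\tau\Psi)^{\,n-a-b}\cdot 1$ (as written there, $(\Xi[-1]-\Psi\,\text{ad}_\tau)^{\,n-a-b}\cdot 1$) and $\Theta[-\beta]$, and expand $(-\Psi)^{a+b-2l}=(-1)^{a+b-2l}(a+b-2l)!\sum_{J}\prod_{j\in J}e_{-j}e_j$, the sum being over all $(a+b-2l)$-element subsets $J\subset\{-n,\dots,-1\}$.

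After this substitution a typical term is a product in $\Lambda\otimes U(\off)$ whose exterior part is built from: the $e_{-i}$ with $i\in I_1$ coming from $\tilde\Theta$ (positive subscripts, forming $-I_1\subset\{1,\dots,n\}$); the $e_i$ with $i\in J_2$ and the $e_{-j}$ with $j\in J_1$ coming from the determinant factor; the pairs $e_{-j}e_j$ with $j\in J$ coming from $\Psi$; and the $e_i$ with $i\in I_2$ coming from $\Theta$. Such a term contributes to the top monomial precisely when the negative-subscript index sets $J,J_2,I_2$ are pairwise disjoint with union $\{-n,\dots,-1\}$ and the positive-subscript index sets $-I_1,-J,-J_1$ are pairwise disjoint with union $\{1,\dots,n\}$; these are exactly conditions (\ref{subsets1})--(\ref{subsets2}) with $|J|=a+b-2l$, and they are compatible with the constraints $l(\alpha)=l(\beta)=l$, $|I_1|=|I_2|=2l$, $|J_1|=|J_2|=n-a-b$ already present in Theorem \ref{thm1}. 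The numerical constant is then a bookkeeping matter: combining $\tfrac{2^{n-2l}n!}{(n-a-b)!}$ with the two factors $2^l l!$ from the Pfaffian rewritings, the factor $(n-a-b)!$ from the determinant rewriting, and $(a+b-2l)!$ from $\Psi^{a+b-2l}$, one obtains $2^n n!\cdot\tfrac{l!}{m_1!m_2!\cdots}\cdot\tfrac{l!}{m_1'!m_2'!\cdots}\cdot(a+b-2l)!$, so after dividing by $2^n n!$ the coefficients of (\ref{uzhas}) appear.

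The real content is the sign. One has to show that the sign produced by reordering the above exterior monomial into $e_{-n}\cdots e_{-1}e_1\cdots e_n$, multiplied by the $(-1)^{a+b-2l}$ coming from $(-\Psi)^{a+b-2l}$, equals $\text{sgn}(J,J_2,I_2)\,\text{sgn}(-I_1,-J_1,-J)$. I would do this by first moving all negative-subscript generators to the left of all positive-subscript ones — which splits the $\Psi$-pairs $e_{-j}e_j$ and produces a sign depending only on the sizes $l,a,b,n$ — and then sorting the negative block and the positive block separately. The negative block, read in its order of appearance, is the concatenation of $J$ (increasing, from $\Psi$), $J_2$ (increasing, from the determinant), and $I_2$ (increasing, from $\Theta$), so sorting it into $(-n,\dots,-1)$ produces exactly $\text{sgn}(J,J_2,I_2)$; the positive block should, after suitable manipulation, produce $\text{sgn}(-I_1,-J_1,-J)$. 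Summing over all admissible $(l,a,b)$ and all subsets satisfying (\ref{subsets1})--(\ref{subsets2}) then yields (\ref{uzhas}).

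The step I expect to be the main obstacle is precisely this last sign matching for the positive block. The notation $-I$ for a subset $I$ and the ordering $e_{-j_r}\cdots e_{-j_1}$ appearing in Lemma \ref{pfdet} each conceal an order reversal, and the positive generators emerge from $\tilde\Theta$, $\Psi$ and the determinant factor in an order of blocks that is not literally $[-I_1,-J_1,-J]$; one must check that all of these reversals and block transpositions, together with the $(-1)^{a+b-2l}$ and the sign from ungluing the $\Psi$-pairs, cancel against one another and leave precisely the product $\text{sgn}(J,J_2,I_2)\,\text{sgn}(-I_1,-J_1,-J)$ with no residual sign. Everything outside this verification is a mechanical substitution. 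This is the affine counterpart of Theorem~4.7 of \cite{TH}, and the argument follows the same outline.
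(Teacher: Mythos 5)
Your proposal is correct and follows essentially the same route as the paper: substitute Lemma \ref{pfdet} into Theorem \ref{thm1}, expand $(-\Psi)^{a+b-2l}$ over $(a+b-2l)$-element subsets $J\subset\{-n,\dots,-1\}$, and read off the coefficient of the top monomial via Lemma \ref{omn}. The sign verification you flag as the one remaining obstacle is exactly the step the paper disposes of with the single reordering identity $(-1)^{(a+b-2l)(a+b-2l-1)/2}\, e_{J}\, e_{-J}\, e_{-I_1}\, e_{J_2}\, e_{-\tilde J_1}\, e_{I_2} = e_{J}\, e_{J_2}\, e_{I_2}\, e_{-I_1}\, e_{-J_1}\, e_{-J}$, after which comparison with $e_{-n}\cdots e_{-1}e_{1}\cdots e_{n}$ yields precisely $\text{sgn}(J,J_2,I_2)\,\text{sgn}(-I_1,-J_1,-J)$ --- i.e., your expectation that the order reversals, block transpositions, and the sign from splitting the $\Psi$-pairs all cancel is exactly what the paper asserts at the corresponding point.
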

\begin{proof} 
Using  for a subset  $I=( i_1<\dots <i_k)$    of $\{-n,\dots, -1\}$ the  notations
$
{e_{I}}=  e_{i_1}\dots  e_{i_{k}}$,
${e_{-I}}= e_{-i_1}\dots  e_{-i_{k}}$, $
{e_{-\tilde I}}=e_{-i_k}\dots  e_{-i_{1}},
$
combine Theorem  \ref{thm1}   and Lemma \ref{pfdet}  to obtain
\begin{align*}
\Omega[-1]^n &= 2^{n} n! \sum_{l=0}^{\lfloor n/2\rfloor}
\sum_{0\le a+b\le n} \sum_{I_1,J_1,I_2,J_2}  
(-\Psi)^{ a+b-2l}
{e_{-I_1}} {e_{J_2}} {e_{-\tilde {J}_1}} {e_{I_2}}\\
&\quad \times \sum_{\alpha\vdash a, l(\alpha)=l}\frac{l!}{m_1! m_2!\dots}
\text{Pf}\, \Phi_ {-I_1 I_1}[-\alpha]  ( \text{det}\, \Phi_ {J_2 J_1}[-1]  \cdot 1 )\\
&\quad \times
\sum_{\beta \vdash b, l(\beta)=l}\frac{l!}{m^\prime_1! m^\prime_2!\dots}
\text{Pf}\, \Phi_ {I_2 -I_2}[-\beta]  
\end{align*}
Note that 
\begin{align*}
(-\Psi)^{ r}&= (r)!\sum_{ J\subset\{-n\dots -1\},\, |J|=r} (-1)^{\frac{r(r-1)}{2}} e_{J} e_{-J},
\end{align*}
and that 
$$ (-1)^{\frac{{(a+b-2l)}(a+b-2l-1)}{2}} e_{J} e_{-J}{e_{-I_1}} {e_{J_2}} {e_{-\tilde {J}_1}} {e_{I_2}}
= {e_{J}} {e_{{J}_2}} {e_{I_2}}e_{-I_1} e_{-J_1} {e_{-J}}.$$

By comparing the  order of  the  terms of this monomial    with the monomial  $e_{-n}e_{-n+1}\dots\\ e_{-1}e_1\dots e_{n-1}e_{n}$ we get  formula (\ref{uzhas}).
\end{proof}

\section{Harish-Chandra image of the Pfaffian }\label{sec_HC}
The  Feigin-Frenkel center $\mathfrak{z}(\off)$ is a commutative subalgebra of $U(t^{-1}\of[t^{-1}]) ^\fh$. Consider  a left ideal  $ I$    of $U(t^{-1}\of[t^{-1}])$ that is  generated by the elements $F_{ij}[r]$ for $-n\le i<j\le n$ and $r<0$. The quotient of   $U(t^{-1}\of[t^{-1}]) ^\fh$ by the  two-sided ideal  $U(t^{-1}\of[t^{-1}]) ^\fh\cap  I$ is  isomorphic to  the commutative algebra  that is freely generated  by the images of the elements 
 $F_{ii}[r]$ ($i=-n,\dots, -1$,  $r<0$). Denote these images as $\mu_i[r]=F_{ii}[r]$.
 We get an analogue of  classical Harish-Chandra homomorphism 
$$\chi: U(t^{-1}\of[t^{-1}]) ^\fh\to U(t^{-1}\fh[t^{-1}]), $$
and  $U(t^{-1}\fh[t^{-1}]) ^\fh$ can be viewed as a polynomial algebra in variables $\{\mu_i[r], i=-n, \dots,-1, r<0\}$.
The  restriction  of $\chi$ to  $\mathfrak{z}(\off)$ yields an isomorphism
 $\mathfrak{z}(\off)\to \mathcal W(\of)$, where $\mathcal W(\of)$ is the classical $\mathcal{W}$-algebra of $\of$. It is a subalgebra of $U(t^{-1}\of[t^{-1}]) ^\fh$ annihilated by screening operators (\cite{F}, Chapter 7).
The elements of $\mathcal W(\of)$ are polynomials in $\mu_i[r]$ ($r<0$, $i=-n,\dots, -1$). 
We give a new direct proof of the following corollary established in \cite{MM}.
\begin{corollary}\label{HC}
\begin{align*}
\chi( \Pf \tilde F[-1])= (\mu_{-n}[-1] + \text{ad}_\tau)\dots(\mu_{-1}[-1] +\text{ad}_\tau)\cdot1.
\end{align*}
\end{corollary}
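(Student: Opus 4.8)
The plan is to compute the Harish-Chandra image $\chi$ termwise on the explicit formula for $\Pf\tilde F[-1]$ provided by Theorem \ref{thm1} (equivalently Theorem \ref{thm2}), and to show that all but one term dies. First I would recall that $\chi$ kills every generator $F_{ij}[r]$ with $i\neq j$ (more precisely, sends it into the intersection of the Cartan invariants with the ideal $I$), and keeps only the diagonal elements $\mu_i[-r]=F_{ii}[-r]$. Applying $\chi$ to the right-hand side of Theorem \ref{thm1}, I would argue that any factor $\tilde\Theta[-\alpha]$ or $\Theta[-\beta]$ with a nonempty partition involves Pfaffian-type elements $\text{Pf}\,\Phi_{-I+I}[-\alpha]$, $\text{Pf}\,\Phi_{+I-I}[-\beta]$ built out of strictly off-diagonal generators $F_{-i,j}[-\alpha_s]$, $F_{i,-j}[-\beta_s]$ with $i,j\in\{-n,\dots,-1\}$ (so $-i\neq j$), hence these map to zero under $\chi$. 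Therefore only the term with $a=b=0$, $l=0$ survives, giving
\begin{align*}
\chi(\Pf\tilde F[-1]) \;=\; \chi\!\left(\text{(coefficient of }e_{-n}\cdots e_n\text{ in }2^{-n}(n!)^{-1}\Omega[-1]^n)\right)\;=\;\chi\!\left(\text{coeff. in } 2^{-n}(n!)^{-1}\cdot 2^n n!\,(\Xi[-1]-\Psi\,\text{ad}_\tau)^n\cdot1\right),
\end{align*}
up to extraction of the monomial coefficient, i.e.\ everything reduces to understanding $\chi$ on $(\Xi[-1]-\Psi\,\text{ad}_\tau)^n\cdot1$.

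Next I would invoke Lemma \ref{pfdet}: its third identity expresses $(\Xi[-1]-\Psi\,\text{ad}_\tau)^n\cdot1$ as $n!\sum e_{i_1}\cdots e_{i_n}e_{-j_n}\cdots e_{-j_1}(\det\Phi_{I,J}[-1])\cdot1$ over increasing strings $I,J\subset\{-n,\dots,-1\}$. Extracting the coefficient of $e_{-n}\cdots e_{-1}e_1\cdots e_n$ forces $I=J=\{-n,\dots,-1\}$ (after reordering the $e$'s), so we are left with $\chi(\det\Phi[-1])$ where $\Phi[-1]=(F_{i,j}[-1]+\text{ad}_\tau\,\delta_{ij})_{i,j\in\{-n,\dots,-1\}}$. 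Now $\chi$ sends the off-diagonal $F_{i,j}[-1]$ to zero and the diagonal $F_{ii}[-1]$ to $\mu_i[-1]$, but the determinant here is the \emph{column-ordered} noncommutative determinant (as defined just before Lemma \ref{pfdet}): $\det\Phi_{IJ}[-1]=\sum_{\sigma}\text{sgn}(\sigma)\Phi_{i_{\sigma(1)},j_1}\cdots\Phi_{i_{\sigma(l)},j_l}$. For such a column-determinant of a matrix that becomes upper- (or lower-) triangular after applying $\chi$ — which is exactly what happens, since the surviving monomials in $\chi(\Phi_{i,j}[-1])$ are supported on $i=j$ — the image is the product of the diagonal entries taken in the natural column order, namely $(\mu_{-n}[-1]+\text{ad}_\tau)(\mu_{-n+1}[-1]+\text{ad}_\tau)\cdots(\mu_{-1}[-1]+\text{ad}_\tau)\cdot1$. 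I would make this precise by an induction on $n$ expanding the column-determinant along the last column, exactly as one proves that a column-ordered noncommutative determinant of a triangular matrix is the ordered product of its diagonal entries, being careful that $\text{ad}_\tau$ is an operator acting to the right on $1$ and that the ordering of the non-commuting factors $\mu_i[-1]+\text{ad}_\tau$ is dictated by the column index.

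The main obstacle I anticipate is bookkeeping rather than conceptual: one must verify that, after applying $\chi$, the off-diagonal contributions to $\det\Phi[-1]$ genuinely vanish and not merely land in a different triangular part — i.e.\ that no cancellation-avoiding product of off-diagonal $F_{i,j}[-1]$'s with $i\ne j$ can reassemble into a diagonal element of $U(t^{-1}\fh[t^{-1}])$ after projecting modulo $U(t^{-1}\of[t^{-1}])^\fh\cap I$. This is where one uses that $I$ is a \emph{left} ideal generated by the $F_{ij}[r]$ with $i<j$, $r<0$, together with the commutation relations of $\off$, to push all raising-type generators to the right until they annihilate; the weight grading by $\fh$ guarantees that a monomial with at least one strictly off-diagonal factor cannot have weight zero unless it contains a compensating factor, and tracking this forces it into $I$. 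Once that triangularity is established, the remaining steps — extracting the $e_{-n}\cdots e_n$ coefficient and reading off the ordered product — are routine. I would also double-check the normalization constants $2^{n}n!$ from Lemma \ref{omn}, the $n!$ from Lemma \ref{pfdet}, and the factor $(a+b-2l)!$, confirming they cancel to leave coefficient $1$ in front of the surviving ordered product, consistent with Example \ref{ex_2} for $n=2$.
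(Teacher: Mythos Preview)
Your proposal is correct and follows essentially the same route as the paper: apply $\chi$ to Theorem~\ref{thm1}, note that the $\tilde\Theta[-\alpha]$ and $\Theta[-\beta]$ factors vanish so only the $a=b=0$ term survives, then use Lemma~\ref{pfdet} to rewrite $(\Xi[-1]-\Psi\,\mathrm{ad}_\tau)^n\cdot1$ as a column-determinant whose Harish-Chandra image is the ordered product of diagonal entries. You are simply more explicit than the paper about the triangularity/weight argument needed to see that $\chi(\det\Phi[-1]\cdot1)$ collapses to the diagonal product, which is a reasonable elaboration of what the paper leaves implicit.
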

\begin{proof}
 Only  the terms with $a=b=0$ will survive under Harish-Chandra homomorphism in the formula  of Theorem \ref{thm1} for $\Omega[-1]^n$:   
\begin{align*}
\chi(\Omega [-1]^n)&=\chi (
2^n (\Xi[-1] -T\text{ad}_\tau)^{n}\cdot 1)=
2^nn!\, e_{1}\dots e_n e_{-1}\dots e_{-n}\,   \chi (\text{det}\,(F [-1]+\text{ad}_\tau)\cdot1)\\
&=2^n n! \,  e_{1}\dots e_n e_{-1}\dots e_{-n}\, (\mu_{-n}[-1]+\text{ad}_\tau )\dots(\mu_{-1}[-1]+\text{ad}_\tau )\cdot 1
\end{align*}
\end{proof}

\section{Deducing  the  formula for  Pfaffian for  $U(\of)$}\label{fol}
In this section we would like to illustrate that the application of evaluation homomorphism to the statement of Theorem \ref{thm1}  implies the formulas for Pfaffian of the universal enveloping algebra $U(\of)$  in \cite{TH} (namely, Proposition 4.4.  there).

 Let $z$ be another variable and consider a homomorphism of algebras  $\varphi:\,  U(t^{-1}\of[t^{-1}])\to   U(\of)\otimes z^{-1} \bC\langle z^{-1} \rangle $, defined  by  $\varphi(F_{ij}[r])= F_{ij}\otimes z^r$,  $\varphi(K)= 0$.
  Then the action of $\text{ad}_\tau$ is identified with the operator $-\partial_z$ by the rule
  $\varphi([\tau, f])=-\partial_z(\varphi(f))$. 
Let us apply $\varphi$ to the formula of the Theorem \ref{thm1}.

We have:
\begin{align*}
\varphi(\Omega [-1]^n)=\Omega ^n z^{-n}, \quad \text{where}\quad  \Omega=\sum_{i,j\in[-n;n]}e_ie_{-j} F_{i,j}\in \Lambda\otimes U(\of).
\end{align*}
Note that  for  a  partition $\alpha\vdash a$ of length $l(\alpha)=l$, the  evaluation map gives
$\varphi(\tilde \Theta [-\alpha])= \tilde \Theta^l z^{-a}\quad \text{and}\quad \varphi(\Theta [-\alpha])= \Theta^l z^{-a},$ where $ \tilde \Theta=\sum_{i=1}^{n}\sum_{j=-n}^{-1}e_ie_{-j} F_{i,j}$,
$\Theta=\sum_{i=-n}^{1}\sum_{j=1}^{n}e_ie_{-j} F_{i,j}$.
Also 
$$ \varphi ((\Xi[-1] -\Psi\, \text{ad}_\tau)^{k}\cdot 1)=
((\Xi z^{-1} +\Psi\, \partial_z)^{k}\cdot 1), $$
where $
\Xi=\sum_{i,j=-n}^{-1} {e_i e_{-j}} F_{i,j}
$.
Using that  in the notations $\alpha =(1^{m_1}, 2^{m_2},\dots)$, 
\begin{align}
\sum_{\alpha\vdash a, l(\alpha)=l}\frac{l!}{m_1! m_2!\dots}={{a-1} \choose {l-1}}
\end{align}
(see e.g. Corollary 6 in \cite{CW}),
and that
\begin{align}
\sum_{a+b=p}{{a-1} \choose {l-1}}{{b-1} \choose {l-1}}= {{p-1} \choose {2l-1}},
\end{align}
we obtain from Theorem \ref{thm1},
\begin{align}\label {z-p}
\Omega^n z^{-n} &= \sum_{l=0}^{\lfloor n/2\rfloor}  \frac{2^{n-2l} n!}{l!l!} 
  \tilde  \Theta^l \quad\Big( \sum_{2l\le p\le n} \frac{1}{(n-p)!}{{p-1} 
\choose {2l-1}}\\
&\quad \times (-\Psi)^{ p-2l} \,(\Xi z^{-1} +\Psi\partial_z)^{n-p}\cdot 1\,
\Big)  \Theta^l z^{-p}.
\end{align}
Let us multiply both parts of (\ref{z-p})  by $z^{-n}$. 
Observe that 
$$
((\Xi z^{-1} +\Psi\partial_z)^{n-p}\cdot 1) z^{n-p}=  (\Xi)(\Xi-\Psi)(\Xi -2\Psi)\dots (\Xi-(n-p+1)\Psi),
$$
and
\begin{align*}
\frac{1}{(n-p)!}{{p-1} 
\choose {2l-1}}=
\frac{1}{(n-2l)!}{{n-2l}\choose{n-p}}\frac{(p-1)!}{(2l-1)!}.
\end{align*}
Therefore, we can rewrite the  central part expression of  (\ref{z-p})  as
\begin{align}\label{xyz}
&\sum_{2l\le p\le n} \frac{1}{(n-p)!}{{p-1} 
\choose {2l-1}}
 (-\Psi)^{ p-2l} \,((\Xi z^{-1} +\Psi \partial_z)^{n-p}\cdot 1)\, z^{n-p}\notag
\\
=
& \frac{1}{(n-2l)!} \sum_{p=0}^{n}{{n-2l}\choose{n-p}}y^{(p-2l)}x^{(n-p)}
\end{align}
with
\begin{align*}
 y=-2l\Psi,&\quad y^{(p-2l)}=  (-2l \Psi)\dots  (-(p-1)\Psi),\\
x=\Xi, &\quad x^{(n-p)}=\Xi(\Xi-\Psi)\dots (\Xi-(n-p+1)\Psi).
\end{align*}
Since $\Xi$  and $\Psi$ commute, (\ref{xyz}) can be rewritten using binomial theorem for falling factorial powers  as 
\begin{align*}
 \frac{1}{(n-2l)!} (x+y)^{(n-2l)}= \frac{1}{(n-2l)!} (\Xi-2l\Psi)^{(n-2l)}= \frac{1}{(n-2l)!}(\Xi-2l\Psi)\dots (\Xi-(n+1)\Psi).
\end{align*}
Hence, from  (\ref{z-p}),
\begin{align}
\Omega^n=\sum_{l=0}^{\lfloor n/2\rfloor}  \frac{2^{n-2l} n!}{l!l!} 
  \tilde  \Theta^l 
\frac{1}{(n-2l)!}(\Xi-2l\Psi)\dots (\Xi-(-n+1)\Psi)
 \Theta^l .
\end{align}
Finally, using the commotion relation 
\begin{align*}
 \tilde  \Theta^l (\Xi-2l\Psi)\dots (\Xi+(n-1)\Psi)=\Xi (\Xi-\Psi)\dots (\Xi-(n-2l-1)\Psi)\tilde \Theta^l,
\end{align*}
we get 
\begin{align}\label{imp}
\Omega^n=\sum_{l=0}^{\lfloor n/2\rfloor}  \frac{2^{n-2l} n!}{(n-2l)! l!l!} 
\Xi(\Xi-\Psi)\dots (\Xi-(n-2l-1)\Psi)\
 \tilde  \Theta^l  \Theta^l, 
\end{align}
which is the statement of Proposition 4.4. in \cite{TH}.
\begin{remark} (a) As  it is discussed  in the proof of Theorem 4.17 of \cite{TH}, necessarily  $p=q$ in Proposition 4.4 of \cite{TH}.\\
(b) Our notation  $\Psi$ stands for $-\tau$ in \cite{TH}.\\
(c)  As it was mentioned before,  Theorem \ref{thm1} and Theorem \ref{thm2} of this note are equivalent statements. Similarly, the results of \cite{TH} cited here in  (\ref{imp}) and (\ref {HP})
are equivalent.  We chose to illustrate how  (\ref{imp}) is deduced from Theorem \ref{thm1} rather then  (\ref {HP}) from Theorem \ref{thm2} in order to avoid technically involved notations of the other two statements.
\end{remark}

\end{document}